\newtheorem{theorem}{Theorem}
\numberwithin{defn}{section}
\newcommand{\bfx}{{\mathbf x}}
\newcommand{\bbR}{{\mathbb R}}
\newcommand{\bbP}{{\mathbb P}}
\newcommand{\bbZ}{{\mathbb Z}}
\newcommand{\bbS}{{\mathbb S}}
\newcommand{\calA}{\mathcal{A}}
\newcommand{\calS}{\mathcal{S}}
\newcommand{\calM}{\mathcal{M}}
\newcommand{\calB}{\mathcal{B}}
\newcommand{\calN}{\mathcal{N}}
\newcommand{\calV}{\mathcal{V}}
\newcommand{\dS}{{\mathfrak S}}
\newcommand{\ch}{{\rm \bf Ch}}
\newcommand{\Dch}{{\rm \bf D}}
\newcommand{\im}{\operatorname{im}}
\newtheorem{remark}{Remark}
\newtheorem{prop}{Proposition}
\newtheorem{lemma}{Lemma}
\title{Application of arrangement theory to unfolding models}
\author{Hidehiko Kamiya, Akimichi Takemura and Norihide Tokushige
%\footnote{This work of the first author was 
%partially supported by JSPS KAKENHI (19540131). 
%The second author is %was 
%supported by JST CREST.}
}
\date{September, 2011}
\begin{document}

\maketitle

\begin{abstract}
Arrangement theory plays an essential role in 
the study of the unfolding model used in many fields. 
This paper describes how arrangement theory can be 
usefully employed in solving the problems of counting 
(i) the number of admissible rankings in an unfolding 
model and (ii) the number of ranking patterns generated 
by unfolding models. 
The paper is mostly expository but also contains some 
new results such as simple upper and lower bounds for 
the number of ranking patterns in the unidimensional 
case.  
\end{abstract}

\noindent
{\it Keywords and phrases}: all-subset arrangement, braid arrangement, 
chamber, characteristic polynomial, 
finite field method, hyperplane arrangement, 
intersection poset, 
mid-hyperplane arrangement, partition lattice, 
ranking pattern, unfolding 
model.

\section{Introduction}

The unfolding model 
(Coombs \cite{coo-50}, De Leeuw \cite{del}) 
is a model for preference rankings in psychometrics. 
It is now widely applied not only in psychometrics 
(De Soete, Feger and Klauer \cite{dfk}) but also in 
other fields such as marketing science 
(DeSarbo and Hoffman \cite{deh}) and 
voting theory (Clinton, Jackman and Rivers \cite{cjr}). 
The model is also used as a submodel for %of 
more complex models, as in item response theory for unfolding 
(Andrich \cite{and-88, and-89}). 
Moreover, in the context of Voronoi diagrams, 
this model can be regarded as a higher-order 
Voronoi diagram (Okabe, Boots, Sugihara and Chiu 
\cite{obsc}). 

The unfolding model describes the ranking process 
in which judges rank a set of objects 
in order of preference. 
In this model, judges and objects are 
assumed to be represented by points in the 
Euclidean space $\bbR^n$. 
Suppose a judge $y\in \bbR^n$ ranks $m$ objects 
$x_1, %x_2, 
\ldots,x_m \in \bbR^n$.  
According to the unfolding model, $y$ ranks 
$x_1,%x_2, 
\ldots,x_m$ in descending order 
of proximity in the usual Euclidean distance.  
Hence, $y$ likes $x_{i_1}$ best, $x_{i_2}$ second 
best, and so on, iff $\| y-x_{i_1} \|<
\| y-x_{i_2} \|<\cdots <\| y-x_{i_m} \|$. 
In this case, we will say $y$ gives ranking 
$(i_1 i_2 \cdots i_m)$.   

For a given $m$-tuple $(x_1,\ldots,x_m)$ 
of objects, let 
${\rm RP}^{{\rm UF}}(x_1,\ldots,x_m)$ be the set 
of admissible rankings, 
i.e., $(i_1 \cdots i_m)$ such that 
$ \| y-x_{i_1} \| < \cdots < \| y-x_{i_m} \|$ 
for some $y\in \bbR^n$. 
We call ${\rm RP}^{{\rm UF}}(x_1,\ldots,x_m)$ the 
ranking pattern of the unfolding model with 
$m$-tuple $(x_1,\ldots,x_m)$. 
In the psychometric literature, there has not been 
much study on the structure of the ranking pattern. 
In this paper, we investigate the ranking pattern by 
using the theory of hyperplane arrangements 
(Orlik and Terao \cite{ort}). 
Specifically, we consider the following two problems: 
\begin{enumerate}
\item[(i)] 
Find the cardinality of 
${\rm RP}^{{\rm UF}}(x_1,\ldots,x_m)$ for a given 
generic $m$-tuple $(x_1,\ldots,x_m)$; 
\item[(ii)] 
Find the cardinality of 
\[
\{ {\rm RP}^{{\rm UF}}(x_1,\ldots,x_m): 
(x_1,\ldots,x_m) \text{ is a generic $m$-tuple} \}. 
\]
\end{enumerate}
The first problem asks how many rankings are admissible 
in one unfolding model, and the second inquires how many 
ranking patterns are possible by using different 
unfolding models 
(that is, by taking different choices of $m$-tuples 
of objects). 
As we will see, these problems can be reduced to 
those of counting the numbers of chambers of 
some real arrangements; 
moreover, the latter problems can be solved by employing 
general results in the theory of hyperplane arragements 
(e.g., Zaslavsky's result on 
the number of chambers of a real arrangement, 
the finite field method, etc.). 
In this sense, arrangement theory plays an essential 
role in the study of the unfolding model. 

This paper gives a survey of recent results 
(\cite{kott}, \cite{kt-97}, \cite{kt-05}, 
\cite{ktt-10}) 
on the problems stated above. 
It also contains new results on upper and lower bounds 
for the number of ranking patterns in the unidimensional 
case $n=1$. 
In addition, the problem of counting %the number of 
inequivalent ranking patterns 
(i.e., those which cannot be obtained from one another 
by just the relabeling of the objects) 
when $n=1$ was not dealt with specifically in 
%Kamiya, Orlik, Takemura and Terao 
\cite{kott} 
but is discussed fully in the present paper. 

The organization of the paper is as follows. 
In Section \ref{sec:NAR}, we define genericness of the 
unfolding model, and give the answer to 
problem (i) above, 
i.e., the number of admissible rankings of 
the unfolding model with generic objects. 
Next, in Section \ref{sec:NRP} we discuss the problem of 
counting the number of ranking patterns (problem (ii)). 
In Subsection \ref{subsec:UUM}, we deal with the 
unidimensional case, and give the number of ranking 
patterns in terms of the number of chambers of 
%an arrangement called 
the mid-hyperplane arrangement. 
We also provide explicit upper and lower bounds for the 
number of ranking patterns. 
In Subsection \ref{subsec:UMCO}, we treat the unfolding 
model of codimension one, where the restriction by 
dimension is weakest. 
In this case, we describe how  
the number of ranking patterns can be expressed by 
the number of chambers of an arrangement 
called 
%which we call %name 
the all-subset arrangement. 

\section{Number of admissible rankings}
\label{sec:NAR} 

In this section, we define genericness of the 
unfolding model, and discuss the problem of 
counting the number of admissible rankings 
generated by the unfolding model with generic objects. 

Suppose we are given %$m \ge 3$ %distinct 
%points %(objects) 
$x_1,\ldots,x_m \in \bbR^n$ with 
$m\ge 3$ and 
$n\le m-2$. 

In general, for $m$ distinct points 
$z_{1},\ldots,z_{m} \in \bbR^{\nu} \ (m \ge \nu+1)$, 
let $\overline{z_{i}z_{j}}$ 
denote the one-simplex connecting two points
$z_{i}$ and $z_{j} \ (i < j)$.
Consider the following condition:

\begin{enumerate}
\item [(A)] 
The union of $\nu$ distinct one-simplices
$\overline{z_{i_{k} } z_{j_{k} } } $ 
$(i_{k} < j_{k}, \ k =1,\ldots,\nu)$ contains no loop
if and only if the corresponding vectors 
$z_{i_{k} } - z_{j_{k} } $ 
$(k =1,\ldots,\nu)$ 
are linearly independent.
\end{enumerate}

We assume $x_1,\ldots,x_m\in \bbR^{n} \ (n\le m-2)$ are 
generic in the sense that they satisfy 
the following two conditions: 
\begin{enumerate}
\item [(A1)] 
The $m$ points $x_1,\ldots,x_m\in \bbR^{n}$ 
satisfy condition (A).
\item [(A2)] The $m$ points
$
(x_1^T, \|x_{1} \|^{2})^T,   
\ldots,
(x_m^T, \|x_{m} \|^{2})^T
\in \bbR^{n+1}$
satisfy condition (A).
\end{enumerate} 

Now, according to the unfolding model, 
judge $y \in \bbR^n$ prefers $x_i$ to 
$x_j$ ($i\ne j$) iff 
$\| y-x_i \| < \| y-x_j \|$. 
This condition is equivalent to $y$ being on the 
same side as $x_i$ of the perpendicular bisector 
\begin{eqnarray*}
H_{ij}
&:=& \{ y\in \bbR^n: \| y-x_i \|=\| y-x_j \| \} \\ 
&=& \{ y\in \bbR^n: 
(x_i-x_j)^T(y-\frac{x_i+x_j}{2})=0 \} 
\end{eqnarray*} 
of the line segment $\overline{x_ix_j}$ 
joining $x_i$ and $x_j$.    
Let us define a hyperplane arrangement 
\[
\calA_{m,n}=\calA_{m,n}(x_1,\ldots,x_m)
:=\{ H_{ij}: 1 \le i<j \le m\} 
\]
in $\bbR^n$. 
We call $\calA_{m,n}$ the unfolding arrangement. 

Then $\calA_{m,n}$, 
like any real hyperplane arrangement, cuts $\bbR^n$ 
into chambers, i.e., 
connected components of the complement 
$\bbR^n \setminus \bigcup \calA_{m,n}$, where 
$\bigcup \calA_{m,n}:=\bigcup_{H %H_{ij}
\in \calA_{m,n}}H$.  
Moreover, each of these chambers is of the form 
\[
C_{i_1\cdots i_m}:=
\{ \| y-x_{i_1} \| %<\| y-x_{i_2} \| 
<\cdots <\| y-x_{i_m} \|\} 
\ne \emptyset
\]
for some admissible ranking $(i_1 %i_2 
\cdots i_m)\in \bbP_m$, 
where $\bbP_m$ denotes the set of permutations of 
$[m]:=\{ 1,\ldots,m\}$. 

We observe that $y\in \bbR^n$ gives ranking 
$(i_1\cdots i_m)\in \bbP_m$ if and only if 
$y \in C_{i_1\cdots i_m}\ne \emptyset$. 
Thus there is a one-to-one correspondence between 
the set of admissible rankings and the set of chambers 
$\ch(\calA_{m,n})$ of $\calA_{m,n}$: 
\[
(i_1\cdots i_m) \leftrightarrow C_{i_1\cdots i_m} 
\]
for $(i_1\cdots i_m)$ such that 
$C_{i_1\cdots i_m}\ne \emptyset$. 
This implies that the problem of counting the number of 
admissible rankings reduces to that of counting the 
number of chambers of $\calA_{m,n}$. 
The answer to the latter problem is given by the 
theorem below. 
Let $\calS^m_k \ (k \in \bbZ)$ be the 
signless Stirling numbers of the first kind: 
$t(t+1)\cdots (t+m-1)=\sum_k \calS^m_kt^k$.  

\begin{theorem}[Good and Tideman \cite{got}, 
Kamiya and Takemura \cite{kt-97,kt-05}, 
Zaslavsky \cite{zas-02}]
\label{th:|ch(A)|}
Suppose $x_1,\ldots,x_m \in \bbR^n \ (n\le m-2)$ 
are generic. 
Then, the number of chambers of 
$\calA_{m,n}=\calA_{m,n}(x_1,\ldots,x_m)$ is  
\[
|\ch(\calA_{m,n})|
=\calS^m_{m-n}+\calS^m_{m-n+1}+\cdots +\calS^m_{m}. 
\]
Furthermore, the number of bounded chambers of 
$\calA_{m,n}$ %=\calA_{m,n}(x_1,\ldots,x_m)$ 
is 
\[
\calS^m_{m-n}-\calS^m_{m-n+1}+\calS^m_{m-n+2}
-\cdots +(-1)^n\calS^m_{m}. 
\]  
\end{theorem}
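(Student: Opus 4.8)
The plan is to reduce both counts to Zaslavsky's theorem, which expresses the number of chambers and of bounded chambers of any real arrangement through its characteristic polynomial $\chi$: for an arrangement in $\bbR^n$ one has $|\ch(\calA_{m,n})|=(-1)^n\chi(\calA_{m,n},-1)$, while the number of bounded chambers equals $(-1)^n\chi(\calA_{m,n},1)$. Everything therefore hinges on computing $\chi(\calA_{m,n},t)$, and I would obtain it by recognizing $\calA_{m,n}$ as a generic section of the braid arrangement $\calB_m=\{\,z_i=z_j:1\le i<j\le m\,\}$ in $\bbR^m$, whose intersection poset is the partition lattice $\Pi_m$ and whose characteristic polynomial is the classical $\chi(\calB_m,t)=t(t-1)\cdots(t-m+1)$.

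The geometric bridge is the lift $\hat x_i:=(x_i^T,-\tfrac12\|x_i\|^2)^T\in\bbR^{n+1}$, for which a one-line computation gives
\[
y\in H_{ij}\iff(\hat x_i-\hat x_j)^T(y^T,1)^T=0 ,
\]
so that $y$ lies on $H_{i_1j_1}\cap\cdots\cap H_{i_rj_r}$ exactly when $(y^T,1)^T$ is orthogonal to every difference $\hat x_{i_k}-\hat x_{j_k}$. From this I would determine the intersection poset of $\calA_{m,n}$ as follows. The intersection $\bigcap_{(i,j)\in E}H_{ij}$ depends only on the partition $\pi(E)$ of $[m]$ into the connected components of the edge set $E$, so every flat corresponds to a partition $\pi\in\Pi_m$. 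Next, condition (A1) shows that any forest with at most $n$ edges has linearly independent normals $x_{i_k}-x_{j_k}$ (extend it to $n$ edges, possible since $m\ge n+2$, and apply (A1)); hence, the defining affine system being consistent, the flat of $\pi$ is nonempty with codimension equal to the graphic rank $m-|\pi|$ whenever $m-|\pi|\le n$. Finally, condition (A2) shows that any $n+1$ forest edges give vectors $\hat x_{i_k}-\hat x_{j_k}$ that are linearly independent in $\bbR^{n+1}$, hence spanning; since no vector with last coordinate $1$ can be orthogonal to a spanning set, the associated bisectors have empty intersection, and all accidental concurrences are ruled out. Together these identify $L(\calA_{m,n})$ with the lower part $\{\pi\in\Pi_m:m-|\pi|\le n\}$ of $\Pi_m$, the flat of $\pi$ having dimension $|\pi|-(m-n)$, and the values $\mu(\hat 0,\pi)$ are inherited unchanged because each interval $[\hat 0,\pi]$ lies entirely inside the retained part.

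Granting this poset description, $\chi(\calA_{m,n},t)$ is the truncation of $\chi(\calB_m,t)$. Writing $\chi(\calB_m,t)=t(t-1)\cdots(t-m+1)=\sum_j(-1)^{m-j}\calS^m_j\,t^j$ and keeping only the surviving flats (those with $|\pi|\ge m-n$), each of which now contributes $t^{\,|\pi|-(m-n)}$, gives
\[
\chi(\calA_{m,n},t)=\sum_{i=0}^{n}(-1)^{n-i}\calS^m_{\,m-n+i}\,t^{i}.
\]
Evaluating at $t=-1$ and at $t=1$ and multiplying by $(-1)^n$, the signs collapse and produce exactly $\calS^m_{m-n}+\cdots+\calS^m_{m}$ chambers and $\calS^m_{m-n}-\calS^m_{m-n+1}+\cdots+(-1)^n\calS^m_{m}$ bounded chambers. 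I expect the main obstacle to be the rigorous proof that (A1) and (A2) force the intersection poset to be precisely this truncated partition lattice — especially checking that the retained flats have exactly the codimension predicted by the graphic rank and that no unexpected flats survive; once that is in place, only the Stirling bookkeeping and Zaslavsky's theorem remain. The hypothesis $n\le m-2$ enters here, since it is what makes condition (A2), which concerns $n+1$ edges among $m\ge n+2$ points, meaningful.
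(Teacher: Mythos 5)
Your proposal is correct and follows essentially the same route as the paper, which likewise derives the theorem from Zaslavsky's chamber-counting formulas combined with the identification $L(\calA_{m,n})\cong\Pi_m^n$ (Proposition 1), the Stirling numbers then arising from the truncation of $\chi(\calB_m,t)=\sum_j(-1)^{m-j}\calS^m_j t^j$. Your use of (A1) and (A2) via the lifted points to establish the poset isomorphism, and the subsequent evaluation at $t=\mp 1$, matches the argument the paper attributes to \cite{kt-97,kt-05}; you simply supply details the paper delegates to its references.
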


The proof of Theorem \ref{th:|ch(A)|} is based on 
Zaslavsky's general result on the number of 
chambers of an arrangement (Zaslavsky \cite{zas-75}) 
and the following proposition. 
Denote by $\Pi_m$ the partition lattice, consisting of 
partitions of $[m]$ and ordered by refinement. 
Further, let $\Pi_m^n$ stand for the rank 
$n$ truncation of $\Pi_m$, 
i.e., the subposet of $\Pi_m$ comprising elements 
of rank ($=m-\text{ \# of blocks}$)
at most $n$.   

\begin{prop}[Kamiya and Takemura \cite{kt-97,kt-05}]
The intersection poset $L(\calA_{m,n})$ of the 
unfolding arrangement $\calA_{m,n}$ is isomorphic to 
$\Pi_m^n$: 
\[
L(\calA_{m,n})\cong \Pi_m^n.
\]
\end{prop}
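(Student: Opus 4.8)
The plan is to realize the unfolding arrangement as the pullback of the braid arrangement under the affine map $y\mapsto(\ell_1(y),\ldots,\ell_m(y))$, where $\ell_i(y):=-2x_i^Ty+\|x_i\|^2$ is the affine part of $\|y-x_i\|^2$, and then to read off the intersection poset from the combinatorics of partitions, using (A1) and (A2) to control codimensions and nonemptiness respectively. First I would rewrite each bisector as $H_{ij}=\{y\in\bbR^n:\ell_i(y)=\ell_j(y)\}$. Since equality of the $\ell_i$ is transitive, for any set $S$ of pairs the intersection $\bigcap_{(i,j)\in S}H_{ij}$ consists of those $y$ with $\ell_i(y)=\ell_j(y)$ whenever $i$ and $j$ lie in the same block of the partition $\pi(S)$ generated by $S$; hence this intersection depends only on $\pi(S)$, and I will denote it $X_\pi$. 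This gives a surjection from partitions of $[m]$ onto the candidate flats of $\calA_{m,n}$, exactly as for the braid arrangement, whose intersection lattice is $\Pi_m$.

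Next I would determine which $X_\pi$ are nonempty and compute their codimensions. The normal of $H_{ij}$ is $x_i-x_j$, so the codimension of $X_\pi$ is the rank of $\{x_i-x_j:i\sim_\pi j\}$. Reducing to a spanning forest of $\pi$ (which has $\mathrm{rank}(\pi)=m-\#\text{blocks}$ edges and contains no loop) and, if necessary, enlarging it to $n$ forest edges, condition (A1) shows this rank is $\min(\mathrm{rank}(\pi),n)$. Nonemptiness is a consistency question for the affine system, governed by the augmented vectors $(x_i-x_j,\tfrac12(\|x_i\|^2-\|x_j\|^2))$, that is, up to scaling, the lifted differences $(x_i^T,\|x_i\|^2)^T-(x_j^T,\|x_j\|^2)^T$ of (A2); by the same forest argument in $\bbR^{n+1}$, the augmented rank is $\min(\mathrm{rank}(\pi),n+1)$. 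Comparing the two ranks, the system is consistent precisely when $\mathrm{rank}(\pi)\le n$, in which case $X_\pi$ is a flat of codimension $\mathrm{rank}(\pi)$; when $\mathrm{rank}(\pi)>n$ the augmented rank strictly exceeds the coefficient rank and $X_\pi=\emptyset$. Thus the nonempty flats are exactly the $X_\pi$ with $\pi\in\Pi_m^n$, with matching ranks.

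Finally I would verify that $\pi\mapsto X_\pi$ is an order isomorphism onto $L(\calA_{m,n})$. For injectivity I recover $\pi$ from $X_\pi$ via $i\sim_\pi j\iff X_\pi\subseteq H_{ij}$: the forward implication is the transitivity above, and for the converse, if $i\not\sim_\pi j$ then adjoining the edge $(i,j)$ strictly coarsens $\pi$ and raises its rank by one, so by the codimension count $X_\pi\cap H_{ij}$ is properly contained in $X_\pi$, whence $X_\pi\not\subseteq H_{ij}$. Surjectivity is already established. For the order, $\pi$ refines $\pi'$ iff every same-block relation of $\pi$ is one of $\pi'$, iff $X_{\pi'}\subseteq X_\pi$; since $L(\calA_{m,n})$ is ordered by reverse inclusion, this is exactly $X_\pi\le X_{\pi'}$, yielding $L(\calA_{m,n})\cong\Pi_m^n$.

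I expect the main obstacle to be the simultaneous use of (A1) and (A2): one must track the coefficient rank in $\bbR^n$ against the augmented rank in $\bbR^{n+1}$ and show the comparison flips exactly at $\mathrm{rank}(\pi)=n$. Getting this bookkeeping right — in particular, seeing that (A2), through the paraboloid lift, is precisely the hypothesis needed to rule out spurious nonempty intersections of rank $n+1$ — is the delicate point; everything else is formal once the pullback description of $\calA_{m,n}$ is in place.
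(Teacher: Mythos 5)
Your proof is correct and follows the route the paper intends: the paper only states the isomorphism via $X\mapsto I_X$ with $i\sim_X j\iff X\subseteq H_{ij}$ (deferring details to \cite{kt-97,kt-05}), and your map $\pi\mapsto X_\pi$ is precisely its inverse, with the recovery step $i\sim_\pi j\iff X_\pi\subseteq H_{ij}$ matching the paper's description. Your use of (A1) for the coefficient rank $\min(\mathrm{rank}(\pi),n)$ and (A2) for the augmented rank $\min(\mathrm{rank}(\pi),n+1)$, with consistency failing exactly when $\mathrm{rank}(\pi)>n$, is exactly the role these genericity conditions are designed to play.
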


The isomorphism is given by 
\[
L(\calA_{m,n})\ni X \mapsto I_X \in \Pi_m^n, 
\]
where $I_X$ %:=[m]/\sim_X$ 
is the partition of $[m]$ into equivalence classes 
under the equivalence relation $\sim_X$ defined by  
$i \sim_X j \stackrel{{\rm def}}{\iff} X \subseteq 
H_{ij}$ ($H_{ii}:=\bbR^n$). 
%($H_{ii}:=\bbR^n$ and $H_{ij}:=H_{ji}$ for $i>j$). 

\begin{remark}
When $n\ge m-1$, and $x_1,\ldots,x_m \in \bbR^n$ 
satisfy condition (A1) with the $\nu=n$ in (A) 
replaced by $m-1$, 
we can easily see that $|\ch(\calA_{m,n})|=m!$ 
and that the number of 
bounded chambers of $\calA_{m,n}$ is zero 
(so the results in Theorem \ref{th:|ch(A)|} continue 
to be valid).  
Therefore, all $m!$ rankings arise as unbounded chambers of 
$\calA_{m,n}$ in this case.  
\end{remark}

\section{Number of ranking patterns}
\label{sec:NRP} 

In this section, we consider the problem of 
counting the number of ranking patterns. 
We treat two extreme cases---the unidimensional 
unfolding model: $n=1$ (Subsection \ref{subsec:UUM}) 
and the unfolding model of 
codimension one: $n=m-2$ (Subsection \ref{subsec:UMCO}). 

\subsection{Unidimensional unfolding models} 
\label{subsec:UUM} 

In this subsection, we look into %examine 
the problem of counting 
the number of ranking patterns of  
unidimensional unfolding models: $n=1$. 
A related problem is studied in Stanley \cite{sta-06}. 

In this case $n=1$, 
objects are $m$ points on the real line: 
$x_1,\ldots,x_m\in \bbR$. 
We assume $x_1,\ldots,x_m$ are generic, i.e., 
the midpoints $x_{ij}:=(x_i+x_j)/2, \ 1\le i<j \le m$, 
are all distinct.  
This condition can be written as 
\[
(x_1,\ldots,x_m) 
\in \bbR^m\setminus \bigcup \calM_m, 
\]
where 
$\calM_m:=\calB_m \cup \calN_m$ 
is the mid-hyperplane arrangement 
(Kamiya, Orlik, Takemura and Terao \cite{kott})  
with 
\begin{eqnarray*}
\calB_m &:=& \{ K_{ij}: 1\le i<j \le m \}, 
%\\ && 
\quad 
K_{ij}:=\{ (x_1,\ldots,x_m)\in \bbR^m: x_i=x_j \}, \\ 
\calN_m &:=& \{ H_{ijkl}: (i,j,k,l)\in I_4 \}, \\ 
H_{ijkl} &:=& \{ (x_1,\ldots,x_m)\in \bbR^m: 
x_i+x_j=x_k+x_l \}, \\ 
I_4 &:=& \{ (i,j,k,l): i,j,k,l 
\text{ are all distinct, } \\ 
&& \qquad \qquad \qquad \qquad \qquad 
1\le i<j \le m, \ i<k<l\le m \}.   
\end{eqnarray*}
(In this subsection, we write elements of $\bbR^m$ as 
row vectors.) 
Note that $\calB_m$ is the braid arrangement. 
We have $H_{ij}=\{ x_{ij}\}, \ 
1\le i<j \le m$,     
and $\calA_{m,1}=\{ \{ x_{ij}\}: 1\le i<j \le m\}$. 

An $m$-tuple $\bfx:=(x_1,\ldots,x_m)$ of objects 
gives the ranking pattern 
\[
{\rm RP}^{{\rm UF}}(\bfx)=\{ (i_1\cdots i_m)
\in \bbP_m: |y-x_{i_1}|<\cdots <|y-x_{i_m}| 
\text{ for some } y \in \bbR\}. 
\] 
We want to know 
\begin{equation}
\label{eq:def-r(m)}
r(m):=|\{ {\rm RP}^{{\rm UF}}(\bfx): \bfx 
\in \bbR^m\setminus \bigcup \calM_m \}|. 
\end{equation}

The braid arrangement $\calB_m$ has a chamber 
$C_0\in \ch(\calB_m)$ defined by $x_1<\cdots <x_m$: 
\[
C_0:=\{ (x_1,\ldots,x_m)\in \bbR^m: x_1<\cdots <x_m \}. 
\]
Let us concentrate our attention on $C_0$. 
For $\bfx=(x_1,\ldots,x_m)
\in C_0 \setminus \bigcup \calN_m$ 
and $\bfx'=(x'_1,\ldots,x'_m)
\in C_0 \setminus \bigcup \calN_m$, 
we can easily see that 
${\rm RP}^{{\rm UF}}(\bfx)=
{\rm RP}^{{\rm UF}}(\bfx')$ if and only if 
the order of the midpoints on $\bbR$ is the same for 
$\bfx$ and $\bfx'$ 
(i.e., $\forall (i,j,k,l)\in I_4: 
x_{ij}<x_{kl} \iff x'_{ij}<x'_{kl}$).  
Noting that $x_{ij}<x_{kl}$ iff $(x_1,\ldots,x_m)
\in H_{ijkl}^-:=\{ (x_1,\ldots,x_m)\in\bbR^m: 
x_i+x_j<x_k+x_l \}$,  
we obtain the following lemma. 
 
\begin{lemma}[Kamiya, Orlik, Takemura and Terao \cite{kott}]
\label{lm:RP=RP}
For $\bfx,\bfx'\in C_0 \setminus \bigcup \calN_m$, 
we have ${\rm RP}^{{\rm UF}}(\bfx)=
{\rm RP}^{{\rm UF}}(\bfx')$ if and only if 
$\bfx$ and $\bfx'$ are in the same chamber of 
$\calN_m$.  
\end{lemma}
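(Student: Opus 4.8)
The plan is to route the equivalence through a third object, the \emph{linear order of the midpoints} $x_{ij}$ on $\bbR$, and to show that this order is equivalent both to the $\calN_m$-chamber of $\bfx$ and to the ranking pattern ${\rm RP}^{{\rm UF}}(\bfx)$. First I would translate the right-hand condition into sign data. Since each half-space $H_{ijkl}^-$ is convex, two points $\bfx,\bfx'\in\bbR^m\setminus\bigcup\calN_m$ lie in the same chamber of $\calN_m$ iff they lie on the same side of every hyperplane $H_{ijkl}$; using $x_{ij}<x_{kl}\iff\bfx\in H_{ijkl}^-$, this says exactly that for all $(i,j,k,l)\in I_4$ one has $x_{ij}<x_{kl}\iff x'_{ij}<x'_{kl}$. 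Restricting now to $C_0$, where $x_1<\cdots<x_m$, pins down every \emph{remaining} comparison among midpoints: for two midpoints sharing an index, $x_{ij}<x_{ik}\iff x_j<x_k$, which is forced by $C_0$. Hence within $C_0$ the full linear order of the $\binom{m}{2}$ midpoints is governed precisely by the $I_4$-comparisons, so ``same $\calN_m$-chamber'' is equivalent to ``same linear order of all midpoints.''

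It then remains to show that, for $\bfx\in C_0\setminus\bigcup\calN_m$, the ranking pattern and the linear order of the midpoints carry the same information. Here I would use the sweep of the judge $y$ from $-\infty$ to $+\infty$. The functions $y\mapsto|y-x_i|$ form a simple arrangement of $m$ $V$-shaped curves, each pair meeting exactly once, at the midpoint $x_{ij}$; genericity makes the $\binom{m}{2}$ crossing values distinct. The ranking read off at $y$ starts at $(1\,2\cdots m)$ as $y\to-\infty$ and ends at $(m\cdots 2\,1)$ as $y\to+\infty$, and crossing the $k$-th smallest midpoint swaps the two objects adjacent there. Thus the sequence of rankings along the sweep is a saturated chain, with $\binom{m}{2}+1$ vertices, from the identity $(1\,2\cdots m)$ to the order-reversing permutation $(m\cdots 2\,1)$ in the weak order on $\bbP_m$, and ${\rm RP}^{{\rm UF}}(\bfx)$ is exactly its vertex set. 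If two configurations have the same linear order of midpoints, the two sweeps perform the identical sequence of adjacent transpositions and so realize the same set of rankings; this gives one direction at once.

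The converse, recovering the midpoint order from the \emph{unordered} set ${\rm RP}^{{\rm UF}}(\bfx)$, I would obtain from the chain structure. The set contains exactly one permutation of each inversion number $0,1,\ldots,\binom{m}{2}$, so sorting ${\rm RP}^{{\rm UF}}(\bfx)$ by number of inversions reconstructs the sweep sequence uniquely. Consecutive permutations differ by a single adjacent transposition, which identifies the pair $\{i,j\}$ swapped at the $k$-th crossing and hence names the $k$-th smallest midpoint $x_{ij}$. This recovers the complete linear order of the midpoints from ${\rm RP}^{{\rm UF}}(\bfx)$ alone, so equal ranking patterns force equal midpoint orders.

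Combining the directions, ${\rm RP}^{{\rm UF}}(\bfx)={\rm RP}^{{\rm UF}}(\bfx')$ is equivalent to equality of midpoint orders, which by the first paragraph is equivalent to $\bfx,\bfx'$ lying in the same chamber of $\calN_m$. The main obstacle is precisely this converse direction: the two delicate points to nail down are that each crossing really is an adjacent transposition (the simplicity of the $V$-curve arrangement under genericity, so that no three curves are concurrent) and that a saturated chain in the weak order is determined by its underlying vertex set. The forward direction and the translation to $\calN_m$-chambers are routine by comparison.
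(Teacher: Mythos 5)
Your proposal is correct and follows essentially the same route as the paper: the paper also reduces the lemma to the equivalence ``${\rm RP}^{{\rm UF}}(\bfx)={\rm RP}^{{\rm UF}}(\bfx')$ iff the midpoints have the same linear order,'' combined with the observation that $x_{ij}<x_{kl}$ iff $\bfx\in H_{ijkl}^-$ (and that, on $C_0$, comparisons of midpoints sharing an index are already fixed). The only difference is that the paper dismisses the first equivalence as ``easily seen,'' whereas you supply a full justification via the sweep of $y$ and the saturated chain in the weak order, including the recovery of the midpoint order from the unordered set of rankings; both of the delicate points you flag (no three distance graphs concurrent, and reconstruction of the chain by sorting on inversion number) do go through under the genericity assumption.
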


Put 
\[
r_0(m):=
|\{ {\rm RP}^{{\rm UF}}(\bfx): \bfx 
\in C_0\setminus \bigcup \calN_m \}|,  
\]
i.e., the number of ranking patterns of unidimensional 
unfolding models with generic $m$-tuples 
such that $x_1<\cdots <x_m$. 
Then, by Lemma \ref{lm:RP=RP} we have  
%\begin{equation*}
\begin{equation} % changed by tokushige
\label{eq:r0}
r_0(m)=\frac{|\ch(\calM_m)|}{m!}  
%\end{equation*}
\end{equation} %changed by tokushige
(Kamiya, Orlik, Takemura and Terao \cite{kott}). 

Now consider $r(m)$ in \eqref{eq:def-r(m)}. 
%Define $C_{i_1\cdots i_m}:=\{ (x_1,\ldots,x_m)
%\in \bbR^m:  x_{i_1}<\cdots <x_{i_m}\}$ for 
%$(i_1\cdots i_m)\in \bbP_m$. 
For $\bfx=(x_1,\ldots,x_m)\in 
\bbR^m\setminus \bigcup \calM_m$, 
define $-\bfx:=(-x_1,\ldots,-x_m)
\in \bbR^m\setminus \bigcup \calM_m$. 
Then, clearly we have ${\rm RP}^{{\rm UF}}(\bfx)=
{\rm RP}^{{\rm UF}}(-\bfx)$.  
On the other hand, for $C,C'\in \ch(\calM_m)$ 
such that $C' \ne \pm C \ (-C:=
\{ -\bfx: \bfx\in C\})$, we can easily see that  
${\rm RP}^{{\rm UF}}(\bfx)\ne 
{\rm RP}^{{\rm UF}}(\bfx')$ for $\bfx\in C$ 
and $\bfx'\in C'$.  
These two facts, together with Lemma \ref{lm:RP=RP}, 
yield the following theorem. 

\begin{theorem}
\label{th:r(m)n=1}
The number of ranking patterns of unidimensional 
unfolding models with generic $m$-tuples of objects is 
\[
r(m)=\frac{m!}{2}r_0(m)=\frac{|\ch(\calM_m)|}{2}, 
\quad m \ge 3. 
\] 
\end{theorem}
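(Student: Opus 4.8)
The plan is to prove the theorem by establishing the chain of two equalities $r(m) = \frac{m!}{2} r_0(m)$ and $\frac{m!}{2} r_0(m) = \frac{|\ch(\calM_m)|}{2}$ separately. The second equality is immediate from the already-established relation \eqref{eq:r0}, namely $r_0(m) = |\ch(\calM_m)|/m!$; substituting this gives $\frac{m!}{2} r_0(m) = \frac{m!}{2}\cdot \frac{|\ch(\calM_m)|}{m!} = \frac{|\ch(\calM_m)|}{2}$. So the entire content of the theorem reduces to the first equality, $r(m) = \frac{m!}{2} r_0(m)$, which I would prove by a counting argument that relates ranking patterns arising from arbitrary generic $\bfx$ to those arising from $\bfx$ lying in the distinguished chamber $C_0$.

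First I would set up the two group actions that organize the chambers of $\calM_m$. The symmetric group $\bbP_m$ acts on $\bbR^m$ by permuting coordinates, and its chambers partition into $\bbP_m$-orbits: since $\calB_m \subseteq \calM_m$, each chamber of $\calM_m$ lies inside a unique chamber of the braid arrangement $\calB_m$, and the $m!$ braid chambers are permuted simply transitively by $\bbP_m$. Consequently every $\bbP_m$-orbit of $\calM_m$-chambers meets the closure of $C_0$ in exactly the chambers contained in $C_0$, so the chambers of $\calM_m$ lying in $C_0$ form a complete set of orbit representatives, and $|\ch(\calM_m)| = m! \cdot (\text{number of }\calM_m\text{-chambers in } C_0)$. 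By Lemma \ref{lm:RP=RP} the number of $\calM_m$-chambers in $C_0$ equals $r_0(m)$, which reproduces \eqref{eq:r0} and confirms that the relabeling action merely permutes coordinates.

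Next I would bring in the antipodal involution $\bfx \mapsto -\bfx$. The two observations stated just before the theorem are the key inputs: first, ${\rm RP}^{{\rm UF}}(\bfx) = {\rm RP}^{{\rm UF}}(-\bfx)$ always, so antipodal chambers $C$ and $-C$ give the same ranking pattern; second, if $C' \neq \pm C$ then the patterns differ. Combining these with Lemma \ref{lm:RP=RP} (which says distinct non-antipodal chambers give distinct patterns within a single braid chamber, and the relabeling argument extends this across braid chambers) shows that the map sending a chamber $C \in \ch(\calM_m)$ to ${\rm RP}^{{\rm UF}}(\bfx)$, $\bfx \in C$, is constant exactly on the antipodal pairs $\{C, -C\}$ and separates distinct pairs. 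Since the antipodal map is a fixed-point-free involution on $\ch(\calM_m)$ — no chamber can equal its own reflection, as that would force a point with $\bfx = -\bfx$, i.e. $\bfx = 0$, which lies on every $K_{ij}$ and hence outside all chambers — the chambers pair up perfectly into $|\ch(\calM_m)|/2$ antipodal pairs, each pair carrying one distinct ranking pattern. Therefore $r(m) = |\ch(\calM_m)|/2$.

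The main obstacle I anticipate is the careful bookkeeping in combining the two symmetries, specifically verifying that the antipodal identification is consistent with the relabeling reduction to $C_0$ so that the factor of $2$ is not double-counted or missed. Concretely, one must check that the involution $\bfx \mapsto -\bfx$ does not preserve the orbit structure in a way that reduces the orbit count by more than a factor of $2$: the reflection $-C_0$ is a braid chamber distinct from $C_0$ (it is the chamber $x_1 > \cdots > x_m$), so no $\calM_m$-chamber in $C_0$ is antipodal to another chamber in $C_0$, which guarantees that the antipodal pairing and the $\bbP_m$-relabeling are genuinely independent identifications. Making this independence precise — equivalently, confirming that the stabilizer of a generic ranking pattern under the combined group of order $2\cdot m!$ is trivial — is the one place where the argument must be stated with care rather than waved through; the rest is direct substitution into \eqref{eq:r0}.
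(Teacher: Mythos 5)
Your proposal is correct and follows essentially the same route as the paper: the second equality is just \eqref{eq:r0}, and the first comes from the two facts stated before the theorem (that ${\rm RP}^{{\rm UF}}(\bfx)={\rm RP}^{{\rm UF}}(-\bfx)$ and that chambers $C'\ne\pm C$ yield distinct patterns), combined with Lemma \ref{lm:RP=RP}, so that the chamber-to-pattern map is exactly two-to-one. Your explicit verification that the antipodal involution is fixed-point-free on $\ch(\calM_m)$ (via convexity, or via $-C_0\ne C_0$) and that it interacts consistently with the $\bbP_m$-relabeling is a detail the paper leaves implicit, and it is a worthwhile addition rather than a deviation.
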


Let us define equivalence of ranking patterns by 
saying that two ranking patterns 
${\rm RP}^{{\rm UF}}(\bfx)$ and 
${\rm RP}^{{\rm UF}}(\bfx')$ 
are equivalent iff 
\begin{equation}
\label{eq:equiv}
{\rm RP}^{{\rm UF}}(\bfx)
=\sigma{\rm RP}^{{\rm UF}}(\bfx') \ \text{ for some } 
\sigma \in \dS_m, %\bbP_m, 
\end{equation}
where 
$\dS_m$ is the symmetric group on $m$ letters, 
consisting of all bijections: $[m]\to [m]$, 
and 
$\sigma{\rm RP}^{{\rm UF}}(\bfx'):=
\{ (\sigma(i_1)\cdots \sigma(i_m)): 
(i_1\cdots i_m)\in {\rm RP}^{{\rm UF}}(\bfx')\}$. 
%When two ranking patterns are not equivalent,  
%we will say they are inequivalent. 
We want to find the number of inequivalent 
ranking patterns. 

Let $r_{{\rm IE}}(m)$ be   
the number of inequivalent ranking patterns of 
unidimensional unfolding models with generic 
$m$-tuples of objects: 
\[
r_{{\rm IE}}(m):=|\{ [{\rm RP}^{{\rm UF}}(\bfx)]: 
\bfx \in \bbR^m\setminus \bigcup \calM_m\}|, 
\] 
where $[ \, \cdot \, ]$ stands for the equivalence class 
under the equivalence relation defined by 
\eqref{eq:equiv}. 
We will see that $r_{{\rm IE}}(m)$ is 
half of $r_0(m)$ for $m\ge 4$. 
%For $\bfx=(x_1,\ldots,x_m)\in 
%\bbR^m\setminus \bigcup \calM_m$, 
%define $\sigma:[m]\to[m]$ by 
%$x_{\sigma(1)}<\cdots <x_{\sigma(m)}$. 
%Then 
%\[
%\bfx'=(x'_1,\ldots,x'_m):=
%(-x_{\sigma(m+1-\sigma^{-1}(1))},\ldots,
%-x_{\sigma(m+1-\sigma^{-1}(m))})
%\] 
%also satisfies 
%$\bfx'\in \bbR^m\setminus \bigcup \calM_m$ and 
%$x'_{\sigma(1)}<\cdots <x'_{\sigma(m)}$. 
%When $m\ge 4$, four indices $1,2,m-1,m$ are 
%all distinct and we have $x_{\sigma(1),\sigma(m)}
%<x_{\sigma(2),\sigma(m-1)}$ iff 
%$x'_{\sigma(1),\sigma(m)}>x'_{\sigma(2),\sigma(m-1)}$. 
Suppose we are given $\bfx=(x_1,\ldots,x_m)\in 
C_0\setminus \bigcup \calN_m$ with $m\ge 4$. 
Then $\bfx'=(x'_1,\ldots,x'_m):=
(-x_m,\ldots,-x_1)$ also lies in 
$C_0\setminus \bigcup \calN_m:$ 
$\bfx'\in C_0\setminus \bigcup \calN_m$. 
Moreover, since $m\ge 4$, four indices $1,2,m-1,m$ are 
all distinct and we have $x_{1m}
<x_{2, m-1}$ iff 
$x'_{1m}>x'_{2,m-1}$.   
This means ${\rm RP}^{{\rm UF}}(\bfx)\ne 
{\rm RP}^{{\rm UF}}(\bfx')$ by Lemma \ref{lm:RP=RP}. 
However, $[{\rm RP}^{{\rm UF}}(\bfx)]=
[{\rm RP}^{{\rm UF}}(\bfx')]$  
since ${\rm RP}^{{\rm UF}}(\bfx)= 
{\rm RP}^{{\rm UF}}(-\bfx)$. 
Next, it can be seen that 
any $\bfx''\in C_0\setminus \bigcup \calN_m$ such that 
${\rm RP}^{{\rm UF}}(\bfx'')\ne 
{\rm RP}^{{\rm UF}}(\bfx)$ and 
$[{\rm RP}^{{\rm UF}}(\bfx'')]=
[{\rm RP}^{{\rm UF}}(\bfx)]$ 
satisfies ${\rm RP}^{{\rm UF}}(\bfx'')=
{\rm RP}^{{\rm UF}}(\bfx')$. 
%there is no $\bfx''\in C_0\setminus \bigcup \calN_m$ 
%satisfying  
% %$\bfx, \bfx', \bfx''$ are all in different 
% %chambers of $\calN_m$ 
%${\rm RP}^{{\rm UF}}(\bfx'')\ne  
%{\rm RP}^{{\rm UF}}(\bfx)$, 
%${\rm RP}^{{\rm UF}}(\bfx'')\ne 
%{\rm RP}^{{\rm UF}}(\bfx')$ 
%and  
%$[{\rm RP}^{{\rm UF}}(\bfx'')]=
%[{\rm RP}^{{\rm UF}}(\bfx)]$. 
These arguments lead to the following theorem. 

\begin{theorem}
\label{th:inequiv-n=1}
The number of inequivalent ranking patterns of 
unidimensional unfolding models with generic 
$m$-tuples of objects is 
\[
r_{{\rm IE}}(m)=
\begin{cases}
%r_0(m)=\frac{|\ch(\calM_m)|}{m!} 
r_0(3)=\frac{|\ch(\calB_3)|}{3!}=1 
& \text{ if $m=3$,} \\ 
\frac{r_0(m)}{2}
=\frac{|\ch(\calM_m)|}{2 \, \cdot \, m!} %{2 \times m!} 
& \text{ if $m\ge 4$. } 
\end{cases}
\] 
\end{theorem}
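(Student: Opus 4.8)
The plan is to recast the count of inequivalent ranking patterns as a count of orbits of a finite group acting on $\ch(\calM_m)$, and to show that this action is free exactly when $m\ge 4$. Recall from the discussion preceding Theorem \ref{th:r(m)n=1} that ${\rm RP}^{{\rm UF}}$ is constant on each chamber of $\calM_m$, that ${\rm RP}^{{\rm UF}}(\bfx)={\rm RP}^{{\rm UF}}(-\bfx)$, and that two chambers $C,C'$ yield the same pattern only when $C'=\pm C$; hence ranking patterns correspond bijectively to the pairs $\{C,-C\}$. Since relabeling by $\sigma\in\dS_m$ sends the pattern of $\bfx$ to that of $\sigma\cdot\bfx$ (where $\dS_m$ permutes coordinates), I would let $G:=\dS_m\times\{\pm 1\}$ act on $\bbR^m$ by $(\sigma,\epsilon)\cdot\bfx=\epsilon\,(\sigma\cdot\bfx)$. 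Then two patterns are equivalent in the sense of \eqref{eq:equiv} if and only if the associated chambers lie in the same $G$-orbit, so $r_{{\rm IE}}(m)$ equals the number of $G$-orbits on $\ch(\calM_m)$.

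First I would compute the stabilizer in $G$ of a braid chamber. Because $G$ permutes the $m!$ chambers of $\calB_m$ and every chamber $C$ of $\calM_m$ lies in a unique braid chamber $B$, any $g$ fixing $C$ must fix $B$. Writing $\tau\in\dS_m$ for the reversal $i\mapsto m+1-i$, one checks $-C_0=\{x_1>\cdots>x_m\}=\tau\cdot C_0$, so $(\tau,-1)$ fixes $C_0$; as $\dS_m$ acts simply transitively on braid chambers, the stabilizer of $C_0$ in $G$ is exactly $\{({\rm id},+1),(\tau,-1)\}$, and every braid chamber has a conjugate order-two stabilizer. Thus the only candidate for a nonidentity element of $G$ fixing a chamber of $\calM_m$ inside $C_0$ is $(\tau,-1)$, whose action is $\bfx\mapsto(-x_m,\ldots,-x_1)=:\iota(\bfx)$.

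The crux is to show that for $m\ge 4$ the involution $\iota$ moves every chamber of $\calM_m$ contained in $C_0$ to a different one. For $\bfx\in C_0\setminus\bigcup\calN_m$ a direct computation shows $\iota$ reverses the order of all midpoints; concretely $(\iota\bfx)_{1m}=-x_{1m}$ and $(\iota\bfx)_{2,m-1}=-x_{2,m-1}$, so their order is opposite to that of $x_{1m},x_{2,m-1}$. Since $m\ge 4$ the indices $1,2,m-1,m$ are distinct, $(1,m,2,m-1)\in I_4$, and $H_{1,m,2,m-1}\in\calN_m$ separates $\bfx$ from $\iota(\bfx)$; by Lemma \ref{lm:RP=RP} they lie in different chambers of $\calM_m$. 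Hence $(\tau,-1)$ fixes no chamber in $C_0$, and transporting by $\dS_m$ shows no nonidentity element of $G$ fixes any chamber, so the action is free. Every orbit then has size $|G|=2\cdot m!$, giving $r_{{\rm IE}}(m)=|\ch(\calM_m)|/(2\cdot m!)=r_0(m)/2$ for $m\ge 4$ via \eqref{eq:r0}.

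The case $m=3$ must be handled separately, since $I_4=\emptyset$ forces $\calN_3=\emptyset$ and $\calM_3=\calB_3$. Here there is no $\calN_3$-hyperplane to cross, so $\iota$ fixes each braid chamber and $(\tau,-1)$ lies in the stabilizer of every chamber; the $G$-action is not free. But $\dS_3$ already acts transitively on the $|\ch(\calB_3)|=6$ chambers, so there is a single orbit and $r_{{\rm IE}}(3)=1=r_0(3)$. I expect the main obstacle to be the freeness argument for $m\ge 4$: identifying the chamber stabilizer as $\{({\rm id},+1),(\tau,-1)\}$ and verifying that $(\tau,-1)$ genuinely crosses an $\calN_m$-hyperplane, which is precisely where the hypothesis $m\ge 4$ (ensuring $(1,m,2,m-1)\in I_4$) enters and where the contrast with $m=3$ originates.
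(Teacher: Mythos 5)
Your proposal is correct and is essentially the paper's argument in group-theoretic packaging: the decisive step in both is the involution $\bfx\mapsto(-x_m,\ldots,-x_1)$, which preserves $C_0$ but crosses the hyperplane $x_1+x_m=x_2+x_{m-1}\in\calN_m$ when $m\ge 4$, so that each equivalence class meets exactly two chambers inside $C_0$. Your orbit/stabilizer formulation also cleanly justifies the step the paper only asserts ("it can be seen that any $\bfx''$ \ldots satisfies ${\rm RP}^{{\rm UF}}(\bfx'')={\rm RP}^{{\rm UF}}(\bfx')$"), since the stabilizer of the braid chamber $C_0$ in $\dS_m\times\{\pm1\}$ has order two.
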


So far, we have expressed the number of ranking patterns 
in terms of the number of chambers of an arrangement. 
We can use the finite field method 
(Athanasiadis \cite{ath-thesis, ath-96}, 
Crapo and Rota \cite{crr}, 
Kamiya, Takemura and Terao \cite{ktt-08, ktt-root, 
ktt-nc}, 
Stanley \cite[Lecture 5]{sta-07}) 
to calculate specific values of $r_0(m), \ m\le 10$:  
\begin{gather*} 
%r_0(3) = 1, \ 
r_0(4) = 2, \ r_0(5) = 12, \ r_0(6) = 168, \ 
r_0(7) = 4680, \\ 
r_0(8) = 229386, \ r_0(9) = 18330206, \ 
r_0(10) = 2241662282. %1120831141. 
\end{gather*} 
The values of $r(m)$ for $m\le 8$ are given in 
Kamiya, Orlik, Takemura and Terao \cite{kott} along with 
the characteristic polynomials $\chi(\calM_m,t)$ of 
$\calM_m, \ m\le 8$. 
After \cite{kott}, the second author of the present 
paper, Takemura \cite{tak}, 
improved on Lemma 3.3 of \cite{kott} and 
calculated $\chi(\calM_9,t)$ and $r_0(9)$;  
later Ishiwata \cite{ish} obtained 
$\chi(\calM_{10},t)$ and 
$r_{0}(10)$ after an extensive computation.   
The characteristic polynomials found by them are: 
\begin{eqnarray*}
\chi(\calM_9, t) 
&=& t(t-1)(t^7 - 413t^6 + 73780t^5 - 7387310t^4 
+ 447514669t^3 \\ 
&& \qquad \qquad 
- 16393719797t^2 + 336081719070t - 2972902161600), \\ 
\chi(\calM_{10},t)
&=& t(t - 1)(t^8 - 674t^7 + 201481t^6 - 34896134t^5 
+ 3830348179t^4 \\ 
&& \qquad \qquad \qquad 
- 272839984046t^3 + 12315189583899t^2 \\ 
&& \qquad \qquad \qquad \qquad 
- 321989533359786t + 3732690616086600). 
\end{eqnarray*}

However, for large values of $m$, 
%this method 
the finite field method 
is not feasible. 
We will provide simple upper and lower bounds for 
$r_0(m)$.  

%\subsection{Upper and lower bounds in unidimensional case}
% Added material begins here (tokushige).

%\begin{theorem}\label{th:r0bound}
%For all $m\geq 4$, we have
%$$
%2\,(3/4)^{m-4}\,\big((m-3)!\big)^2\leq r_0(m)<(e^2m^2/8)^m.
%$$
%\end{theorem}
%
%\begin{proof}
%For the upper bound, 
%recall that $n$ hyperplanes divide $\bbR^m$ into
%at most $\sum_{i=0}^m\binom{n}i\leq(en/m)^m$ chambers,
%see, for example, \cite{mat} 6.1.1 and \cite{mat-nes} 2.6.1.
%In our case, 
%$n=|\calM_m|=|\calB_m|+|\calN_m|=\binom{m}2+3\binom{m}4<m^4/8$
%and 
%$|\ch(\calM_m)|\leq\sum_{i=0}^m\binom{m^4/8}i\leq
%({em^4}/({8m}))^m=({em^3}/8)^m$.
%This together with \eqref{eq:r0} gives 
%$$
%r_0(m)\leq({em^3}/8)^m /m!<({em^3}/8)^m / (m/e)^m=(e^2m^2/8)^m.
%$$

%%\begin{equation*}
%\begin{equation} % changed by tokushige
%\label{eq:r0}
%r_0(m)=\frac{|\ch(\calM_m)|}{m!}  
%%\end{equation*}
%\end{equation} %changed by tokushige
%
%% Added material begins here (tokushige).

\begin{theorem}\label{th:r0bound}
For all $m\geq 4$, we have
$$
2%\, 
\left( \frac{3}{4}\right)^{m-4} 
\left\{ (m-3)!\right\}^2
\leq 
r_0(m)<
\frac{2}{m!}\left\{ \frac{e m(m-1)^2}{8} \right\}^{m-2}.
%\frac{1}{m}\left\{ \frac{e^2(m-1)^2}{8} \right\}^{m-1}.
$$
\end{theorem}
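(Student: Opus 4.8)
The plan is to split into the two bounds after a single reduction. By \eqref{eq:r0} we have $r_0(m)=|\ch(\calM_m)|/m!$, and by Lemma~\ref{lm:RP=RP} this common quantity $|\ch(\calM_m)|=m!\,r_0(m)$ is the number of chambers into which $\calN_m$ cuts the open braid cone $C_0=\{x_1<\cdots<x_m\}$, each of the $m!$ cones of $\calB_m$ contributing equally. Inside $C_0$ only the ``nesting'' hyperplanes $x_p+x_s=x_q+x_r$ with $p<q<r<s$ are actually crossed: for the other two pairings of four indices, $x_1<\cdots<x_m$ forces the two midpoints into a fixed order, so those hyperplanes miss the interior of $C_0$. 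Hence $r_0(m)$ equals the number of chambers of an arrangement of exactly $\binom{m}{4}$ hyperplanes which, after quotienting the common diagonal line and taking a transversal, lives in dimension $m-2$. For the algebra I would also use Zaslavsky's formula together with the factorization $\chi(\calM_m,t)=t(t-1)p_m(t)$ visible in the displayed polynomials, where $\deg p_m=m-2$ and $|\ch(\calM_m)|=2|p_m(-1)|$.

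For the lower bound I would prove the recursion $r_0(m)\ge\frac34(m-3)^2\,r_0(m-1)$ for $m\ge5$; it telescopes from $r_0(4)=2$ to exactly $2(3/4)^{m-4}\{(m-3)!\}^2$. Fix a configuration $x_1<\cdots<x_{m-1}$ realizing a prescribed $(m-1)$-pattern, adjoin a new largest coordinate $x_m$, and let it increase from just above $x_{m-1}$ to $+\infty$. The midpoints not involving the index $m$ never move, so the underlying $(m-1)$-pattern is preserved along the whole ray, while $x_m$ meets the nesting hyperplane $x_m=x_q+x_r-x_p$ once for each triple $p<q<r\le m-1$ with threshold above $x_{m-1}$; every crossing opens a new, honestly realized chamber of $\calN_m$ lying over the same $(m-1)$-chamber. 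The triples with $r=m-1$ are crossed regardless of spacing (there $x_q-x_p>0$ automatically), already giving $\binom{m-2}{2}$ crossings and hence quadratic growth; the remaining work is to choose the base configuration, still inside the prescribed chamber, so that enough triples with $r<m-1$ also cross, sharpening the constant to $\frac34$. Since chambers over distinct $(m-1)$-patterns are disjoint, summing over patterns gives the recursion.

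The upper bound is where I expect the real difficulty. It is equivalent to $|\ch(\calM_m)|<2\{em(m-1)^2/8\}^{m-2}$, and the warning is that the textbook chamber bound is useless here: $\sum_{k\le m-2}\binom{\binom{m}{4}}{k}$ already exceeds the target for large $m$ (e.g.\ near $8.5\cdot10^{13}$ against $1.8\cdot10^{13}$ at $m=10$), precisely because the nesting hyperplanes are very far from general position. So one must exploit the special position rather than ignore it. My route would be to write $|\ch(\calM_m)|=2|p_m(-1)|=2\sum_k|c_k|$, where the $c_k$ are the coefficients of $p_m$; these alternate in sign (as in the displayed $\chi(\calM_9,t)$), so that $|p_m(-1)|$ is literally the sum of their absolute values. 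I would then bound each $|c_k|$ by the corresponding Whitney number, i.e.\ a weighted count of rank-$k$ flats of the intersection poset, over-count those flats by $\binom{N'}{k}$ with $N'=\frac12\binom{m}{2}\binom{m-1}{2}$, and finish with $\binom{N'}{m-2}\le(eN'/(m-2))^{m-2}=\{em(m-1)^2/8\}^{m-2}$; the factor $\frac{2}{m!}$ in the statement is then free from $|\ch(\calM_m)|=m!\,r_0(m)$. The crux, and the step I expect to fight with, is getting a clean enough handle on the flats of $L(\calM_m)$ (equivalently on how the $x_i=x_j$ and $x_i+x_j=x_k+x_l$ relations can be simultaneously dependent), since that is exactly the non-genericity that the crude bound throws away.
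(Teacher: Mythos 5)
Your lower bound contains the genuine gap. The step ``choose the base configuration, still inside the prescribed chamber, so that enough triples with $r<m-1$ also cross'' cannot be carried out: whether the moving midpoint $(x_p+x_m)/2$ ever meets the fixed midpoint $x_{qr}$ as $x_m$ runs from $x_{m-1}^{+}$ to $+\infty$ is decided by the single comparison $x_{qr}>x_{p,m-1}$ between two midpoints of the base $(m-1)$-tuple, and the order of all base midpoints is precisely the data that defines the chamber of $\calN_{m-1}$ inside $C_0$. So the set of crossing triples is a chamber invariant, and no choice of representative can enlarge it. The worst case genuinely occurs: cluster $x_1,\ldots,x_{m-2}$ and push $x_{m-1}$ far to the right, so that every $x_{p,m-1}$ exceeds every $x_{qr}$ with $r\le m-2$; then only the $\binom{m-2}{2}$ triples with $r=m-1$ cross, the one-sided factor degrades to about $\tfrac12(m-3)^2$ (already short of $\tfrac34(m-3)^2$ for $m\ge 7$), and the telescoping yields $(1/2)^{m-4}$ rather than $(3/4)^{m-4}$. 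The paper's missing ingredient is a two-sided averaging: one may also adjoin a new \emph{smallest} object, and the two counts satisfy $N+N'=1+\binom{m}{2}+(m-3)(m-2)>\tfrac32(m-2)^2$ for \emph{every} chamber, because $|V_1|+|V_1'|=\binom{m}{2}-1$ identically (the midpoints above and below $x_{1m}$ partition $M\setminus\{x_{1m}\}$). Since right-extensions of distinct chambers restrict to distinct patterns on the first $m$ objects, and likewise on the left, each of $\sum_C N_C$ and $\sum_C N'_C$ is separately a lower bound for $r_0(m+1)$; their average gives $r_0(m+1)>\tfrac34(m-2)^2\,r_0(m)$, which is the recursion you want.

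By contrast, the difficulty you anticipate in the upper bound is not there, and your warning about the textbook bound comes from applying it to the wrong arrangement. You are right that the crude bound on the $\binom{m}{4}$ nesting hyperplanes inside $C_0$ overshoots the target; the paper instead applies the crude bound \emph{before} restricting: the central essential arrangement $\calM_m^0$ has $\tilde h=\binom{m}{2}+3\binom{m}{4}\le m(m-1)^2(m-2)/8$ hyperplanes in $\bbR^{m-1}$, hence at most $2\sum_{k=0}^{m-2}\binom{\tilde h-1}{k}\le 2\{em(m-1)^2/8\}^{m-2}$ chambers, and dividing by $m!$ gives the claim; the factor of roughly $3^{m-2}$ lost by counting all three pairings is negligible against the $m!$ gained. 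Your Whitney-number route is this same estimate in disguise: $|c_k|\le\binom{\tilde h-1}{k}$ holds for \emph{every} central arrangement (the $|c_k|$ are the no-broken-circuit counts, equivalently the Betti numbers of the deconed complement), so the step you expected to fight with --- controlling the flats of $L(\calM_m)$ and exploiting the special position --- is not needed at all. Completing your outline with the generic over-count you already wrote down lands exactly on the paper's proof.
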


\begin{proof}
First, we derive the upper bound in the theorem. 

Define $H_0:=\{ (x_1,\ldots,x_m)\in \bbR^m: 
x_1+\cdots +x_m=0 \}$, and consider the 
essentialization 
(Stanley \cite[p.392]{sta-07}) 
%\cite[p.392]{sta-07} 
$\calM_m^{0}:=\{ H \cap H_0: H \in \calM_m \}$ 
of $\calM_m$. 
Since $L(\calM_m^{0}) \cong L(\calM_m)$, we may 
consider the essential, central arrangement 
$\calM_m^0$ in $H_0 \ (\dim H_0=m-1)$ instead of 
$\calM_m$. 

Recall, in general, that  
$h$ hyperplanes divide $\bbR^d$ into
at most $\sum_{i=0}^d\binom{h}i\leq(eh/d)^d
%:=
=:c(h,d)$ 
chambers 
(see, e.g., \cite[Proposition 6.1.1]{mat} %6.1.1 
%and \cite[Theorem 2.6.1]{mat-nes}). %2.6.1.
and \cite[Theorem 3.6.1]{mat-nes}). %2nd ed. 
Thus, $\tilde{h}$ linear hyperplanes divide 
$\bbR^{\tilde{d}}$ into 
at most $2c(\tilde{h}-1,\tilde{d}-1)$ chambers.  

In our case, $\calM_m^0$ is central, 
so we can take 
$\tilde{h}=|\calM_m|=|\calB_m|+|\calN_m|
=\binom{m}2+3\binom{m}4
\le m(m-1)^2(m-2)/8 \ (m \ge 4)$ 
and $\tilde{d}=m-1$. 
Hence, we have 
\begin{eqnarray*}
|\ch(\calM^0_m)|
&\le& 2 c(\tilde{h}-1,\tilde{d}-1) \\ 
&\le& 2 \times \left\{ 
\frac{e \left( \frac{m(m-1)^2(m-2)}{8}-1\right)}{m-2} 
\right\}^{m-2} \\ 
&<& 2 \times \left\{ \frac{e m(m-1)^2}{8}\right\}^{m-2}. 
\end{eqnarray*}
%(In fact, our arrangement is central, that is, all hyperplanes are 
%linear subspaces. So working in the intersection with the hyperplane
%$x_1+\cdots+x_m=1$, we may take $d=m-1$.)
This together with \eqref{eq:r0} 
and $|\ch(\calM_m)|=|\ch(\calM_m^0)|$ gives 
the upper bound of $r_0(m)$ in the theorem.   

Next, we will obtain the lower bound in the theorem. 

Let $\bfx=(x_1,\ldots,x_m)$, $x_1<\cdots<x_m$ be fixed. We add
one more object $y=x_m+2t$ $(t>0)$ to $\bfx$, and we will count
the number of ranking patterns arising from 
${\bf y}_t=(\bfx,y)$, $t>0$. 
%to obtain a lower bound for $r_0(m)$.
Let $M=\{x_{ij}:1\leq i<j\leq m\}$ be the set of midpoints for $\bfx$, 
and %let 
$Y_t=\{x_{im}+t:1\leq i\leq m\}$ %be 
the set of midpoints of
$x_i \ (1\le i \le m)$ and $y$. 
Then $M\cup Y_t$ is the set of midpoints for ${\bf y}_t$. 
To guarantee all these midpoints are distinct, we require the following. 
First, by perturbing each $x_i$ without changing the ranking pattern 
of $\bfx$, we may assume that $x_1,\ldots,x_m$ are independent over 
$\mathbb Q$. Then we have $|M\cap Y_t|\leq 1$ for all $t>0$.
Next, let $T_0=\{t>0:|M\cap Y_t|=1\}$,
$T_1=(0,\infty)\setminus T_0$, and we only consider $t\in T_1$.
Then $M\cup Y_t$ is legal, i.e., all midpoints are distinct.

Now the crucial observation is as follows:
$|\{{\rm RP}^{\rm UF}({\bf y}_t):t\in T_1\}|=1+|T_0|$.
Moreover, we have $|T_0|=\sum_{i=1}^{m-1}|V_i|$, where
$V_i=\{v\in M:x_{im}<v\}$.
Using $|V_i|\geq m-1-i$ 
obtained by 
%because of 
$V_i\supset\{x_{jm}:i<j<m\}$, 
we have
$$|\{{\rm RP}^{\rm UF}({\bf y}_t):t\in T_1\}|= 1+\sum_{i=1}^{m-1}|V_i| 
\geq 1+|V_1|+
\frac{(m-3)(m-2)}{2} 
%\frac{1}{2}(m-3)(m-2) 
=:N.$$
Namely, $N$ is a lower bound for the number of ranking patterns
arising from ${\bf y}_t, \ t\in T_1$. 

Applying exactly the same argument to $\bfx'=(-x_m,\ldots,-x_1)$ 
instead of $\bfx$, we see that the number of ranking patterns
arising from $(\bfx',-x_1+2t)$, $t>0$ 
(or equivalently, $(x_1-2t,\bfx)$, $t>0$) is at least
$N'=1+|V'_1|+(m-3)(m-2)/2$, where 
$|V_1'|=|\{u\in M:u<x_{1m}\}|=\binom{m}2-|V_1|-1$.
Notice that $N+N'=1+\binom{m}2+(m-3)(m-2)>(3/2)(m-2)^2$.
Therefore, by the averaging argument, we have
$$
\textstyle
r_0(m+1)\geq r_0(m)\times 
\frac{1}{2}(N+N') %\frac{N+N'}{2}
>\frac{3}{4}(m-2)^2\,r_0(m).
$$
So the induction starting from $r_0(4)=2$ gives the
desired lower bound.
\end{proof}

%Let $\ell(m)$ and $u(m)$ be the lower and upper bounds in the
%theorem, respectively. Clearly, $u(m)^{1/m}/(m^2)=e^2/8\approx 0.92$.
%A computation shows $\ell(m)^{1/m}/(m^2)\to 3/(4e^2)\approx 0.1$.
%It would be interesting to prove (or disprove) the existence
%of $\lim (r_0(m))^{1/m}/(m^2)$. 
%%
%Strangely enough, $r_0(m)=a(m)$ holds for $4\leq m\leq 7$,
%where $a(m) = (m-2)((m-2)^{m-3}-1)(m-4)!/(m-3)$, 
%but $r_0(8)>a(8)$, $r_0(9)>a(9)$, $r_0(10)<a(10)$, and 
%$a(m)^{1/m}/(m^2)\to 1/e\approx 0.37$.
%%
%Thrall gave an upper bound $f(m)$ for $r_0(m)$ in \cite{thr},
%which satisfies $f(m)<u(m)$ for $m\leq 11$, $f(m)>u(m)$
%for $m\geq 12$, and $u(m)=o(f(m))$.
%% Added material ends here (tokushige).

Let $\ell(m)$ and $u(m)$ be the lower and upper bounds in the
theorem, respectively. 
A computation shows 
$\{u(m)\}^{1/m}/m^2 \to e^2/8\approx 0.92$ and
$\{ \ell(m)\}^{1/m}/m^2\to 3/(4e^2)\approx 0.1$ 
as $m\to \infty$.
It would be interesting to prove (or disprove) the existence
of $\lim \{ r_0(m)\}^{1/m}/m^2$. 

Strangely enough, $r_0(m)=a(m)$ holds for $4\leq m\leq 7$,
where 
%$a(m) := (m-2)((m-2)^{m-3}-1)(m-4)!/(m-3)$, 
\[
a(m) := \frac{(m-2)\{ (m-2)^{m-3}-1\} \cdot (m-4)!}{m-3}, 
\]
but $r_0(8)>a(8)$, $r_0(9)>a(9)$, $r_0(10)>a(10)$. 
%$r_0(10)<a(10)$. 
Also, $a(m)$ satisfies   
$\{ a(m)\}^{1/m}/m^2 \to 1/e\approx 0.37$. 
We mention that 
$a(m)/\{(m-3)!\}
=(m-2)\{(m-2)^{m-3}-1\}/(m-3)^2 
%=\{(m-2)^{m-2}-(m-2)\}/(m-3)^2 
\ (m\ge 4)$ 
is the number of acyclic-function digraphs on $m-2$ vertices 
(Walsh \cite{wal}, OEIS id:A058128). 

Thrall \cite{thr} gave an upper bound $f(m)$ for $r_0(m)$: 
\[
f(m):=
\frac{
\{ \frac{m(m-1)}{2}\}! \, \prod_{i=1}^{m-2}i!}
{\prod_{i=1}^{m-1}(2i-1)!}.
\]
Here, $f(m)$ is the number of mappings 
$\{(i,j): 1\le i<j \le m\}\ni (i,j) \mapsto d(i,j)\in 
\{1,2,\ldots,m(m-1)/2\}$ satisfying the condition that 
$d(i,j)$ be increasing in $i$ for each fixed $j$ as well as 
increasing in $j$ for each fixed $i$. 
He obtained this number by considering a problem similar to 
that of counting the number of standard Young tableaux. 
Since for $\bfx=(x_1,\ldots,x_m)\in C_0\setminus \bigcup \calN_m$, 
the ranks $d_{\bfx}(i,j)$ of the midpoints 
$x_{ij}=(x_i+x_j)/2$ from left to right 
on the real line $\bbR$ meet this condition, $f(m)$ is an upper 
bound for $r_0(m)$.   
We can see our $u(m)$ satisfies $f(m)<u(m)$ for $m\leq 8$, 
$f(m)>u(m)$ for $m\geq 9$, and $u(m)=o(f(m))$. 
For $m$ such that $f(m)<u(m)$, we know the exact values 
$r_0(m)$ anyway, 
so the upper bound $u(m)$ based on arrangement theory may be 
said to be better than $f(m)$. 
% Added material ends here (tokushige). 

We list the values of $r_0(m)$, $a(m)$, $f(m)$ 
and approximate values of $\ell(m)$, $u(m)$ 
for $m=4,\ldots,10$ in Table \ref{table:r0-f}. 
(For $\ell(m), \ m \le 9$, and 
$u(m), \  m\le 6$, we exhibit 
$\lceil \ell(m) \rceil$ and $\lfloor u(m) \rfloor$, 
respectively. 
For $\ell(10)$, we display 
$\lceil \ell(m) \times 10^{-4} \rceil \times 10^4$, 
and similarly using $\lfloor \, \cdot \, \rfloor$ 
for $u(m), \  m \ge 7$.) 

\begin{table}
\caption{$r_0(m), \, a(m), \, \ell(m), \, u(m), 
\, f(m), \ 4\le m\le 10$. %for $m=4,\ldots,10$.
}
%\begin{center} 
\setlength{\tabcolsep}{4pt}
\begin{tabular}{crrrrr} \toprule 
{\footnotesize $m$} & {\footnotesize $r_0(m)$} & 
{\footnotesize $a(m)$} & {\footnotesize $\ell(m)$} & 
{\footnotesize $u(m)$} & {\footnotesize $f(m)$}
\\ \midrule 
{\footnotesize $4$} & {\footnotesize $2$} & 
{\footnotesize $2$} & 
{\footnotesize $2$} & 
%{\footnotesize $12.469$} & 
{\footnotesize $12$} & 
{\footnotesize $2$} 
\\ 
{\footnotesize $5$} & {\footnotesize $12$} & 
{\footnotesize $12$} & 
{\footnotesize $6$} & 
%{\footnotesize $334.759$} & 
{\footnotesize $334$} & 
{\footnotesize $12$} 
\\ 
{\footnotesize $6$} & {\footnotesize $168$} & 
{\footnotesize $168$} & 
%{\footnotesize $40.5$} & 
{\footnotesize $41$} & 
%{\footnotesize $18,744.8$} & 
{\footnotesize $18,744$} & 
{\footnotesize $286$} 
\\ 
{\footnotesize $7$} & {\footnotesize $4,680$} & 
{\footnotesize $4,680$} & 
{\footnotesize $486$} & 
%{\footnotesize $1.82652 \times 10^6$} & 
{\footnotesize $1.82 \times 10^6$} & 
{\footnotesize $33,592$} 
\\ 
{\footnotesize $8$} & {\footnotesize $229,386$} & 
{\footnotesize $223,920$} & 
%{\footnotesize $9,112.5$} & 
{\footnotesize $9,113$} & 
%{\footnotesize $2.76983 \times 10^8$} & 
{\footnotesize $2.76 \times 10^8$} & 
{\footnotesize $23,178,480$} 
\\ 
{\footnotesize $9$} & {\footnotesize $18,330,206$} & 
{\footnotesize $16,470,720$} & 
{\footnotesize $246,038$} & 
%{\footnotesize $6.06256 \times 10^{10}$} & 
{\footnotesize $6.06 \times 10^{10}$} & 
{\footnotesize $108,995,910,720$} 
\\ 
{\footnotesize $10$} & {\footnotesize $2,241,662,282$} & %{\footnotesize $1,120,831,141$} & 
{\footnotesize $1,725,655,680$} & 
%{\footnotesize $9.04188 \times 10^6$} & 
{\footnotesize $9.05 \times 10^6$} & 
%{\footnotesize $1.81461 \times 10^{13}$} & 
{\footnotesize $1.81 \times 10^{13}$} & 
{\footnotesize $3,973,186,258,569,120$}    
\\ \bottomrule 
\end{tabular} 
%\end{center} 
\label{table:r0-f}
\end{table}

\subsection{Unfolding models of codimension one} 
\label{subsec:UMCO} 

%newcommand bbS 

%newcommand calV

%newcommand Dch

%newcommand im

%newcommand dS 

%(newcommand rank)

%p.5, (2):$\sigma \in \bbP_m\to \dS_m$ 

%no $\calA, \calA^0 \ (\to \calA_m, \calA_m^0)$ 

%$\calB_m^{H_0}\to \calB_m^0$ 

In this subsection, we deal with the problem of counting 
the number of ranking patterns of unfolding models 
of codimension one: $n=m-2$
(i.e., when the restriction by dimension is 
weakest). 

First, let us forget the unfolding model for a while 
and consider the ranking patterns of braid slices. 

We begin by defining the ranking pattern of a braid 
slice. 
For  
\[
H_0=\{ x=(x_1,\ldots,x_m)^T\in \bbR^m: 
x_1+\cdots +x_m=0 \},  
\]
%(column vector) 
consider the essential arrangement 
\[
%\calB_m^{H_0}
\calB_m^0:=\{ H\cap H_0: H\in \calB_m\}
\] 
in $H_0$, and write its chambers 
%of $\calB_m^0$ 
as  
\[
B_{i_1\cdots i_m}:=\{ x=(x_1,\ldots,x_m)^T\in H_0: 
x_{i_1} > \cdots >x_{i_m} \}
\in \ch(\calB_m^0) 
%\quad (i_1\cdots i_m)\in \bbP_m. 
\]
for $(i_1\cdots i_m)\in \bbP_m$.  
Moreover, define a hyperplane 
\[
K_v:=\{ x\in H_0: v^Tx=1\}
\]
in $H_0$ for each 
$v\in \bbS^{m-2}:=\{ x\in H_0: \| x\|=1\}$. 
Now we call the subset 
\[
{\rm RP}(v):=\{ (i_1\cdots i_m)\in \bbP_m: 
K_v \cap B_{i_1\cdots i_m}\ne \emptyset \}, 
%\subset \bbP_m, 
\quad v\in \bbS^{m-2}, 
\]
of $\bbP_m$ the ranking pattern of the braid 
slice by $K_v$.  

%Now, let us look at the condition for 
%the elements of $\bbP_m$ to be in ${\rm RP}(v)$ 
%for a generic $v\in \bbS^{m-2}$. 
%We define genericness of $v$ as follows.  
Next, let us define genericness of %$v\in \bbS^{m-2}$ 
the braid slice as follows.  
For the all-subset arrangement 
(Kamiya, Takemura and Terao \cite{ktt-10}) 
%\begin{gather*}
\[
%\calA=
\calA_m:=\{ H_I: I \subseteq [m], \ 
|I|\ge 1 \}%, \\ 
%H_I:=\{ x=(x_1,\ldots,x_m)^T\in \bbR^m: 
%\sum_{i\in I}x_i=0\}, 
\]
%\end{gather*}
with $H_I:=\{ x=(x_1,\ldots,x_m)^T\in \bbR^m: 
\sum_{i\in I}x_i=0\}, \ \emptyset \ne I\subseteq [m]$, 
consider its restriction to $H_0=H_{[m]}$:  
\begin{gather*}
%\calA^0=
\calA^0_m:=\calA^{H_0}_m
=\{ H^0_I: I\subset [m], \ 1\le |I| \le m-1\}, \\ 
H^0_I:=H_I\cap H_0 \quad (1\le |I| \le m-1). 
\end{gather*}
Then define 
\[
\calV:=(H_0\setminus \bigcup \calA_m^0)\cap \bbS^{m-2}. 
\]
We will say $v\in \bbS^{m-2}$, or the braid slice by 
$K_v$, is generic if $v\in \calV$.    

%\begin{lemma}[Kamiya, Takemura and Terao \cite{ktt-10}]
%\label{lm:KcapC}
%Let $v=(v_1,\ldots,v_m)^T\in \calV$. 
%Then we have 
%\[
%\bbP_m\setminus {\rm RP}(v)
%=\{ (i_1\cdots i_m)\in \bbP_m: 
%\sum_{k=1}^{l}v_{i_k}<0 
%\text{ for all $l=1,\ldots,m-1$} 
%\}. 
%\] 
%Furthermore, for $(i_1\cdots i_m)\in {\rm RP}(v)$, 
%we have that $K_v \cap C_{i_1\cdots i_m}\ne \emptyset$ 
%is bounded if and only if 
%$\sum_{k=1}^{l}v_{i_k}>0$ for all $l=1,\ldots,m-1$.  
%\end{lemma}

Now, we will see that the set of 
ranking patterns 
${\rm RP}(v)$ for generic $v$'s 
is in one-to-one correspondence with the 
set of 
chambers of $\calA_m^0$.  
Write $\calV$ as 
$\calV=\bigsqcup_{D\in \Dch(\calA_m^0)}D$ 
(disjoint union), where  
\[
\Dch(\calA_m^0):=\{ D=\tilde{D}\cap \bbS^{m-2}: 
\tilde{D}\in \ch(\calA_m^0) \},  
\]
which clearly is in one-to-one correspondence with 
$\ch(\calA_m^0)$. 
%\begin{prop}[Kamiya, Takemura and Terao \cite{ktt-10}]
%There is a bijection from $\Dch(\calA_m^0)$ to 
%$\{ {\rm RP}(v): v\in \calV \}$ given by 
%\[
%\Dch(\calA_m^0)\ni D \mapsto {\rm RP}(v), \ v\in D. 
%\]  
%\end{prop}
Then, we can prove 
(Kamiya, Takemura and Terao \cite{ktt-10}) 
that there is a bijection from $\Dch(\calA_m^0)$ to 
$\{ {\rm RP}(v): v\in \calV \}$ given by 
\begin{equation}
\label{eq:bijection-DtoRP}
\Dch(\calA_m^0)\ni D \mapsto {\rm RP}(v), \ v\in D. 
\end{equation}
Hence, 
\[
{\rm RP}_D:={\rm RP}(v) 
\text{ for } v\in D \in \Dch(\calA_m^0) 
\] 
is well-defined, and 
the mapping $\Dch(\calA_m^0) \to 
\{ {\rm RP}_D: D\in \Dch(\calA_m^0)\}
=\{ {\rm RP}(v): v\in \calV \}: 
D \mapsto {\rm RP}_D$ is bijective.  
%\[
%\Dch(\calA_m^0)\ni D \mapsto 
%{\rm RP}_D \in \{ {\rm RP}_D: D\in \Dch(\calA_m^0)\}
%=\{ {\rm RP}(v): v\in \calV \}
%\]

Let us get back to the unfolding model and consider 
%Let us move on to 
the ranking pattern of the unfolding 
model of codimension one. 

Suppose we are given $x_1,\ldots,x_m \in \bbR^n$ with 
$n=m-2\ge 1$. 
We assume $x_1,\ldots,x_m$ are generic in the sense 
that they satisfy (A1) and (A2) in 
Section \ref{sec:NAR}. 
We call the unfolding model with such $x_1,\ldots,x_m
\in \bbR^{m-2}$ the unfolding model of codimension one 
(for the reason stated %mentioned 
below).  
In addition, we will %also 
assume without loss of generality 
that $x_1,\ldots,x_m$ are taken so that 
$\sum_{i=1}^mx_i=0, \ \sum_{i=1}^m\| x_i\|^2/m=1$. 

We will %now 
see that the ranking pattern of 
the unfolding model of codimension one 
with $m$-tuple $(x_1,\ldots,x_m)$: 
\begin{eqnarray}
{\rm RP}^{{\rm UF}}(x_1,\ldots,x_m)
&=& \{ (i_1\cdots i_m)\in \bbP_m: 
\| y-x_{i_1}\|<\cdots <\| y-x_{i_m}\| 
%\label{eq:||<||} \\ 
\notag \\ 
&& \qquad \qquad \qquad \qquad \qquad \qquad 
\text{ for some } y \in \bbR^{m-2}\} 
%\notag 
\label{eq:||<||}
\end{eqnarray}
can be expressed as the ranking pattern of 
a braid slice. 

Define 
\begin{eqnarray*}
W &=& {\rm W}(x_1,\ldots,x_m)=(w_1,\ldots,w_{m-2}):=
\begin{pmatrix}
x_1^T \\ 
\vdots \\ 
x_m^T
\end{pmatrix}
\in {\rm Mat}_{m\times (m-2)}(\bbR), \\ 
u &=& {\rm u}(x_1,\ldots,x_m):=-\frac{1}{2}
\begin{pmatrix}
\| x_1\|^2-1 \\ 
\vdots \\ 
\| x_m\|^2-1 
\end{pmatrix}
\in \bbR^m, 
\end{eqnarray*}
where ${\rm Mat}_{m\times (m-2)}(\bbR)$ 
denotes the set of 
$m\times (m-2)$ matrices with real entries. 
For the affine map 
$\kappa: \bbR^{m-2} \to \bbR^m$ defined 
by $\kappa(y):=Wy+u, \ y\in \bbR^{m-2}$, 
consider the image 
$K:=\im \kappa =\{ k(y): y\in \bbR^{m-2}\}$ 
of $\kappa$.  
Then we have 
\[
K=u+{\rm col}\,W \subset H_0, 
\]  
where ${\rm col}\,W$ stands for the column space of $W$. 
Using this $K$, 
we can easily see that 
${\rm RP}^{{\rm UF}}(x_1,\ldots,x_m)$ in 
\eqref{eq:||<||} can be expressed as 
\begin{equation}
\label{eq:RP-KcapC}
{\rm RP}^{{\rm UF}}(x_1,\ldots,x_m)
=\{ (i_1\cdots i_m)\in \bbP_m: 
K\cap B_{i_1\cdots i_m}\ne \emptyset \}. 
\end{equation}
We have $\dim K=\dim H_0-1$ and $u\notin {\rm col}\,W$ 
by (A1) and (A2), respectively. 
That is, $K$ is an affine hyperplane of $H_0$. 
For this reason, we called the unfolding model 
with generic 
$x_1,\ldots,x_m\in \bbR^{m-2}$ the unfolding model of 
codimension one. 

Write the affine hyperplane $K\subset H_0$ as 
\[
K=K_{\tilde{v}}:=\{ x\in H_0: \tilde{v}^Tx
=\| \tilde{v} \|^2\} 
\]   
using the orthogonal projection %$\tilde{v}$ 
of $u\in H_0$ on  
$({\rm col}\,W)^{\perp}:=\{ x \in 
H_0: %\bbR^{m-2}: 
x^T W=0\}$:   
\[
\tilde{v}:=\tilde{{\rm v}}(x_1,\ldots,x_m)
=u-{\rm proj}_{{\rm col}\,W}(u),  
\quad u={\rm u}(x_1,\ldots,x_m), 
\] 
where ${\rm proj}_{{\rm col}\,W}$ denotes the 
orthogonal projection on ${\rm col}\,W$.   
Noting $\tilde{v}\ne 0$, we can represent 
\eqref{eq:RP-KcapC} as 
\begin{gather}
{\rm RP}^{{\rm UF}}(x_1,\ldots,x_m)
= \{ (i_1\cdots i_m)\in \bbP_m: 
K_{{\rm v}(x_1,\ldots,x_m)}
\cap B_{i_1\cdots i_m}\ne \emptyset \}, 
\label{eq:RPUF-v} \\ 
{\rm v}(x_1,\ldots,x_m):=
\frac{1}{\|\tilde{v}\|}\tilde{v}\in \bbS^{m-2}, 
%\quad \tilde{v}=\tilde{{\rm v}}(x_1,\ldots,x_m), 
\notag 
\end{gather}
in terms of $K_{{\rm v}(x_1,\ldots,x_m)}
=\{ x\in H_0: {\rm v}(x_1,\ldots,x_m)^Tx=1 \}$ 
instead of $K=K_{\tilde{v}}$. 
The right-hand side of \eqref{eq:RPUF-v} is 
the ranking pattern of the braid slice by 
$K_{{\rm v}(x_1,\ldots,x_m)}$: 
${\rm RP}({\rm v}(x_1,\ldots,x_m))$. 
Besides, it can be seen that 
${\rm v}(x_1,\ldots,x_m)\in \calV$. 

\begin{prop}[Kamiya, Takemura and Terao \cite{ktt-10}] 
\label{prop:RP=RP}
For generic $x_1,\ldots,x_m\in \bbR^{m-2}$, we have 
${\rm v}(x_1,\ldots,x_m)\in \calV$ and 
$$
{\rm RP}^{{\rm UF}}(x_1,\ldots,x_m)=
{\rm RP}({\rm v}(x_1,\ldots,x_m)). 
$$ 
\end{prop}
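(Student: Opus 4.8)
The plan is to read the asserted identity straight off the chain of equivalences already assembled in the text, and to treat $v:={\rm v}(x_1,\ldots,x_m)\in\calV$ as a separate genericity check. Since by definition ${\rm RP}(v)=\{(i_1\cdots i_m)\in\bbP_m: K_{v}\cap B_{i_1\cdots i_m}\ne\emptyset\}$, the equality ${\rm RP}^{{\rm UF}}(x_1,\ldots,x_m)={\rm RP}(v)$ is precisely \eqref{eq:RPUF-v}; so my first task is to establish \eqref{eq:RP-KcapC} and then rewrite $K$ through its unit normal. For \eqref{eq:RP-KcapC} I would complete the square: expanding $\|y-x_i\|^2=\|y\|^2-2x_i^Ty+\|x_i\|^2$ shows that $\|y-x_i\|<\|y-x_j\|$ is equivalent to $x_i^Ty-\tfrac12\|x_i\|^2>x_j^Ty-\tfrac12\|x_j\|^2$. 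The $i$-th coordinate of $\kappa(y)=Wy+u$ equals $x_i^Ty-\tfrac12\|x_i\|^2+\tfrac12$, which differs from the preceding quantity by the additive constant $\tfrac12$ independent of $i$; hence $\|y-x_{i_1}\|<\cdots<\|y-x_{i_m}\|$ holds iff $[\kappa(y)]_{i_1}>\cdots>[\kappa(y)]_{i_m}$, i.e.\ iff $\kappa(y)\in B_{i_1\cdots i_m}$. The normalization $\sum_ix_i=0$, $\sum_i\|x_i\|^2/m=1$ is exactly what forces $\kappa(y)\in H_0$, and together with $K=\im\kappa$ this yields \eqref{eq:RP-KcapC}.

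Next I would identify the slicing hyperplane. By (A1) and (A2) (as noted in the text) one has $\dim K=\dim H_0-1$ and $u\notin{\rm col}\,W$, so $\tilde v:=u-{\rm proj}_{{\rm col}\,W}(u)\ne0$ spans the one-dimensional orthogonal complement of ${\rm col}\,W$ in $H_0$, and $K=K_{\tilde v}$. Here lies the one genuinely careful point in passing to \eqref{eq:RPUF-v}: for $x\in K$ one computes $v^Tx=\|\tilde v\|$ rather than $1$, so $K=\{x\in H_0:v^Tx=\|\tilde v\|\}$ is in general \emph{not} the hyperplane $K_{v}=\{x\in H_0:v^Tx=1\}$. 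The two are related by the positive dilation $x\mapsto x/\|\tilde v\|$, and since each braid chamber $B_{i_1\cdots i_m}$ is an open cone (it is cut out by the homogeneous strict inequalities $x_{i_1}>\cdots>x_{i_m}$), this dilation carries $K$ bijectively onto $K_{v}$ while fixing every $B_{i_1\cdots i_m}$ setwise. Hence $K\cap B_{i_1\cdots i_m}\ne\emptyset\iff K_{v}\cap B_{i_1\cdots i_m}\ne\emptyset$, which upgrades \eqref{eq:RP-KcapC} to \eqref{eq:RPUF-v} and settles the identity.

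It remains to verify $v\in\calV$, i.e.\ $\sum_{i\in I}v_i\ne0$ for every $I$ with $1\le|I|\le m-1$; equivalently $e_I^T\tilde v\ne0$, where $e_I\in\bbR^m$ is the indicator vector of $I$. The key reduction is that $\langle\tilde v\rangle^{\perp}={\rm col}\,W\oplus\langle\mathbf 1\rangle$ (both summands are orthogonal to $\tilde v$, with dimensions adding to $m-1$, and $\mathbf 1\notin{\rm col}\,W$ as ${\rm col}\,W\subseteq H_0$). Thus $e_I^T\tilde v=0$ would force $e_I=Wc+\alpha\mathbf 1$ for some $\alpha\in\bbR$ and some $c\in\bbR^{m-2}$, necessarily nonzero since $e_I$ is not a multiple of $\mathbf 1$; componentwise this reads $x_i^Tc+\alpha=1$ for $i\in I$ and $x_i^Tc+\alpha=0$ for $i\notin I$. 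So the linear functional $c^T$ would be constant on $\{x_i:i\in I\}$ and on $\{x_i:i\in I^c\}$. Choosing a spanning tree of $I$ and a spanning tree of $I^c$ produces $(|I|-1)+(|I^c|-1)=m-2$ one-simplices whose union contains no loop; by condition (A), guaranteed by (A1), the corresponding $m-2$ difference vectors are linearly independent, hence a basis of $\bbR^{m-2}$, while $c$ is orthogonal to all of them. This forces $c=0$, a contradiction, so $e_I^T\tilde v\ne0$ and $v\in\calV$.

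The completing-the-square step is routine bookkeeping. I expect the two delicate points to be the dilation argument that lets $K_{\tilde v}$ be replaced by the normalized $K_{v}$ without altering the ranking pattern, and---more substantially---the translation of the abstract hypothesis (A) into the non-vanishing of the partial sums $\sum_{i\in I}v_i$. Recasting $e_I^T\tilde v=0$ as the existence of a separating affine functional and then defeating it with the spanning-forest/no-loop criterion is the crux that actually invokes (A1), and is where I would expect to spend the bulk of the effort.
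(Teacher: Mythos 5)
Your proof is correct and follows the same route the paper itself sketches: completing the square to obtain \eqref{eq:RP-KcapC}, identifying $K=K_{\tilde v}$ and passing to the normalized $K_{{\rm v}(x_1,\ldots,x_m)}$ via the conical invariance of the braid chambers, and deducing ${\rm v}(x_1,\ldots,x_m)\in\calV$ from condition (A). The paper leaves the last two steps as bare assertions (citing \cite{ktt-10}), and your arguments for them---the positive-dilation remark, and the reduction of $e_I^T\tilde v=0$ to a linear functional constant on $\{x_i:i\in I\}$ and on $\{x_i:i\notin I\}$, defeated by a two-component spanning forest and (A1)---are exactly the right way to fill them in.
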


Proposition \ref{prop:RP=RP} 
and bijection \eqref{eq:bijection-DtoRP} 
tell us that 
in order to find the number of ranking patterns of 
unfolding models of codimension one, 
we need to study the image 
%$\im {\rm v}$ 
of the mapping 
%\begin{eqnarray*}
%{\rm v}: \{ (x_1,\ldots,x_m): 
%x_1,\ldots,x_m\in \bbR^{m-2} \text{ are generic}\} 
%&\to& \calV=\bigsqcup_{D\in \Dch(\calA_m^0)}D, \\ 
%(x_1,\ldots,x_m) &\mapsto& {\rm v}(x_1,\ldots,x_m). 
%\end{eqnarray*}
${\rm v}: \{ (x_1,\ldots,x_m): 
x_1,\ldots,x_m\in \bbR^{m-2} \text{ are generic}\} 
\to \calV=\bigsqcup_{D\in \Dch(\calA_m^0)}D, \ 
(x_1,\ldots,x_m) \mapsto {\rm v}(x_1,\ldots,x_m)$.  
In their main theorem (Theorem 4.1), Kamiya, Takemura 
and Terao \cite{ktt-10} proved that the image 
$\im {\rm v}$ is given by 
\begin{equation}
\label{eq:im-v}
\im {\rm v}
=\calV_2 \sqcup D_1 \sqcup \cdots \sqcup D_m
=\calV \setminus ((-D_1)\sqcup \cdots \sqcup (-D_m)), 
\end{equation}
where 
\begin{eqnarray*} 
\calV_2 &:=& 
%\{ v=(v_1,\ldots,v_m)^T\in \calV: 
%|\{ i\in [m]: v_i>0\}| \ge 2, \\  
%&& \qquad \qquad \qquad \qquad \qquad \qquad 
%\qquad \qquad 
%|\{ i\in [m]: v_i<0\}| \ge 2 \}. 
\{ v=(v_1,\ldots,v_m)^T\in \calV: 
\text{ $v_j>0$ for at least two $j \in [m]$ and} \\ 
&& \qquad \qquad \qquad \qquad \qquad \qquad \quad 
\text{$v_k<0$ for at least two $k\in [m]$} \} 
\end{eqnarray*}
and 
\begin{eqnarray*}
D_i &:=& \{ v=(v_1,\ldots,v_m)^T\in \calV: 
v_i>0, \ v_j<0 \ (j\ne i)\}\in \Dch(\calA_m^0), \\ 
-D_i &:=& \{ -v: v\in D_i \} \\ 
&=& \{ v=(v_1,\ldots,v_m)^T\in \calV: 
v_i<0, \ v_j>0 \ (j\ne i)\}
\in \Dch(\calA_m^0)
\end{eqnarray*}
for $i\in [m]$. 

By Proposition \ref{prop:RP=RP} 
and %the image of ${\rm v}$ 
$\im {\rm v}$ in \eqref{eq:im-v}, 
we obtain the number of ranking patterns of 
unfolding models of codimension one, 
which is denoted by 
\[
q(m):=|\{ {\rm RP}^{\rm UF}(x_1,\ldots,x_m): 
\text{ generic } x_1,\ldots,x_m \in \bbR^{m-2}\}|. 
\]

\begin{theorem}[Kamiya, Takemura and Terao \cite{ktt-10}]
\label{th:q(m)=||-m}
The number $q(m)$ of ranking patterns of 
unfolding models of codimension one is given by 
\[
q(m)=|\ch(\calA_m^0)|-m. %, \quad \calA^0=\calA_m^0. 
\]
\end{theorem}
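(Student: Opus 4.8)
The plan is to convert the definition of $q(m)$ into a pure chamber count by chaining together two facts already established in the excerpt: Proposition \ref{prop:RP=RP}, which says that each unfolding ranking pattern equals the braid-slice ranking pattern at the direction ${\rm v}(x_1,\ldots,x_m)$, and bijection \eqref{eq:bijection-DtoRP}, which says that a braid-slice ranking pattern depends only on the chamber $D\in\Dch(\calA_m^0)$ containing the slice direction and that $D\mapsto{\rm RP}_D$ is injective. First I would use Proposition \ref{prop:RP=RP} to rewrite
$$
\{ {\rm RP}^{\rm UF}(x_1,\ldots,x_m): \text{generic } x_1,\ldots,x_m\in\bbR^{m-2}\}
=\{ {\rm RP}(v): v\in \im {\rm v}\},
$$
so that $q(m)=|\{ {\rm RP}(v): v\in \im {\rm v}\}|$.

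Next I would invoke the fact that ${\rm RP}(v)={\rm RP}_D$ whenever $v$ lies in the chamber $D\in\Dch(\calA_m^0)$, together with the injectivity of $D\mapsto {\rm RP}_D$ from \eqref{eq:bijection-DtoRP}. This collapses the count to the number of chambers meeting $\im {\rm v}$, namely $q(m)=|\{ D\in\Dch(\calA_m^0): D\cap \im {\rm v}\ne\emptyset\}|$. The structural input that makes this exact is that $\im {\rm v}$ is a union of whole chambers (no chamber is partially covered), so that $D\cap\im {\rm v}\ne\emptyset$ is equivalent to $D\subseteq\im {\rm v}$.

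That structural input is exactly what \eqref{eq:im-v} provides: $\im {\rm v}=\calV\setminus((-D_1)\sqcup\cdots\sqcup(-D_m))$, where $-D_1,\ldots,-D_m$ are $m$ distinct elements of $\Dch(\calA_m^0)$. Hence the chambers contained in $\im {\rm v}$ are precisely all spherical chambers except these $m$, giving $q(m)=|\Dch(\calA_m^0)|-m$. Since $\Dch(\calA_m^0)$ is in one-to-one correspondence with $\ch(\calA_m^0)$, we get $|\Dch(\calA_m^0)|=|\ch(\calA_m^0)|$ and therefore $q(m)=|\ch(\calA_m^0)|-m$.

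I expect the only genuine content of the argument to be the two bits of bookkeeping---that distinct chambers give distinct patterns, and that exactly $m$ chambers are omitted---and both are already packaged into \eqref{eq:bijection-DtoRP} and \eqref{eq:im-v}. The real difficulty lies upstream, in establishing \eqref{eq:im-v}, that is, in showing the attainable slice directions $\im {\rm v}$ are precisely $\calV$ minus the $m$ antipodal ``one-positive-coordinate'' chambers $-D_i$; but that is the cited Theorem 4.1 of \cite{ktt-10}, which I take as given, so once it is in place the present theorem reduces to the short counting argument above.
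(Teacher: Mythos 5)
Your proposal is correct and follows essentially the same route as the paper, which likewise obtains the theorem by combining Proposition \ref{prop:RP=RP}, the bijection \eqref{eq:bijection-DtoRP}, and the description \eqref{eq:im-v} of $\im {\rm v}$ as $\calV$ minus the $m$ chambers $-D_1,\ldots,-D_m$. The only cosmetic remark is that the injectivity of $D\mapsto{\rm RP}_D$ already gives $q(m)=|\{D:D\cap\im{\rm v}\ne\emptyset\}|$ without needing $\im{\rm v}$ to be a union of whole chambers; that structural fact is only needed afterwards, to evaluate this count as $|\Dch(\calA_m^0)|-m$.
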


Kamiya, Takemura and Terao \cite[Lemma 5.3]{ktt-10} 
obtained the characteristic polynomials 
$\chi(\calA_m^0,t)$ of $\calA_m^0$ for $m\le 8$ 
by the finite field method.  
Then $q(m)$ can be calculated by 
$q(m)=(-1)^{m-1}\chi(\calA_m^0,-1)-m$: 
\begin{gather*}
q(3) = 3, \ q(4) = 28, \ q(5) = 365, \\ 
q(6) = 11286, \ q(7) = 1066037, \ q(8) = 347326344  
\end{gather*}
(%Kamiya, Takemura and Terao 
\cite[Corollary 5.5]{ktt-10}). 

We end this subsection by looking at the problem of 
finding the number of inequivalent ranking patterns 
of unfolding models of codimension one. 

In \eqref{eq:equiv}, we defined equivalence of 
ranking patterns of unidimensional unfolding models.  
%in the unidimensional case. 
%In the case of unfolding models of codimension one, 
We define equivalence of ranking patterns of unfolding 
models of codimension one in an obvious similar manner. 
At the moment,  
we can only give an upper bound for the number 
$q_{{\rm IE}}(m)$ of inequivalent ranking patterns 
of unfolding models of codimension one: 
\begin{equation}
\label{eq:qIE-le}
q_{{\rm IE}}(m) 
\le \frac{|\ch(\calA_m^0 \cup \calB_m^0)|}{m!}-1 
= |\Dch^{1\cdots m}(\calA_m^0)|-1 
= |\Dch_2^{1\cdots m}(\calA_m^0)|+1
\end{equation}
for $m\ge 3$ 
(Kamiya, Takemura and Terao \cite{ktt-10}), where 
%\begin{eqnarray*}
%\Dch^{1\cdots m}(\calA_m^0)
%&:=& \{ D\in \Dch(\calA_m^0): 
%D \cap C_{1\cdots m}\ne \emptyset \}, \\ 
%\Dch^{1\cdots m}_2(\calA_m^0)
%&:=& \{ D\in \Dch(\calA_m^0): 
%D \subset \calV_2, \ 
%D \cap C_{1\cdots m}\ne \emptyset \} \\ 
%&=& \Dch^{1\cdots m}(\calA_m^0)\setminus \{ D_1, -D_m\}. 
%\end{eqnarray*}
$\Dch^{1\cdots m}(\calA_m^0)
:= \{ D\in \Dch(\calA_m^0): 
D \cap B_{1\cdots m}\ne \emptyset \}$ 
and 
$\Dch^{1\cdots m}_2(\calA_m^0)
:= \{ D\in \Dch(\calA_m^0): 
D \subset \calV_2, \ 
D \cap B_{1\cdots m}\ne \emptyset \}
=\Dch^{1\cdots m}(\calA_m^0)\setminus \{ D_1, -D_m\}$. 
%(Kamiya, Takemura and Terao \cite{ktt-10}).
%
%Condition for equality. 
%
%However, it 
It is shown in \cite{ktt-10}, however,  
that the upper bound 
in \eqref{eq:qIE-le} is actually the exact number 
for $m\le 6$. 
The specific values are 
\[
q_{{\rm IE}}(3)=1, \ q_{{\rm IE}}(4)=3, \ 
q_{{\rm IE}}(5)=11, \ q_{{\rm IE}}(6)=55   
\]
(\cite[Subsection 6.2]{ktt-10}). 

{\bf Open problem:} 
Does the upper bound in \eqref{eq:qIE-le} 
agree with the exact number $q_{{\rm IE}}(m)$ 
for all $m$? 

\ 

%\noindent 
{\bf Acknowledgments.} %Acknowledgment}
The authors are very grateful to an anonymous referee 
of Advanced Studies in Pure Mathematics 
for his/her 
valuable comments on an earlier version of this paper. 
%This work of the first author was partially 
The first author is 
supported by JSPS KAKENHI (19540131). 
The second author is %(was?) 
supported by JST CREST. 
%added by tokushige
The last author is supported by JSPS KAKENHI (20340022).

%%%%%%%%%%%%%%%%%%%%%%%%%%%%%%%%%
% References
%%%%%%%%%%%%%%%%%%%%%%%%%%%%%%%%%

\noindent
Hidehiko Kamiya \\ 
{\it 
Graduate School of Economics \\ 
Nagoya University \\ 
Nagoya 464-8601 \\ 
Japan }\\
{\it E-mail address:} {\tt kamiya@soec.nagoya-u.ac.jp} 

\bigskip
\noindent
Akimichi Takemura \\ 
{\it Graduate School of Information Science and Technology \\ 
University of Tokyo \\ 
Tokyo 113-0033 \\ 
Japan}\\
{\it E-mail address:} {\tt takemura@stat.t.u-tokyo.ac.jp} 

\bigskip
\noindent
Norihide Tokushige \\ 
{\it 
College of Education \\ 
Ryukyu University \\ 
Nishihara, Okinawa 903-0213 \\ 
Japan}\\
{\it E-mail address:} {\tt hide@edu.u-ryukyu.ac.jp}

\end{document}